\theoremstyle{plain}
\newtheorem{Thm}{Theorem}
\newtheorem{Lma}[Thm]{Lemma}
\newtheorem{conj}[Thm]{Conjecture}
\theoremstyle{definition}
\newtheorem{Def}[Thm]{Definition}
\theoremstyle{remark}
\newtheorem{Rem}[Thm]{Remark}
\numberwithin{equation}{Thm}
\newcommand{\brq}{^{[q]}}
\newcommand{\mlabel}[1]%
  {\mbox{}\marginpar{\raggedleft\hspace{0pt}{\rm\ttfamily#1}}\label{#1}}
\newcommand{\into}{\operatorname{\hookrightarrow}}
\newcommand{\e}{\operatorname{e}}
\newcommand{\length}{\operatorname{\lambda}}
\newcommand{\Ann}{\operatorname{Ann}}
\newcommand{\Hom}{\operatorname{Hom}}
\newcommand{\fm}{{\mathfrak m}}
\newcommand{\m}{\fm}
\newcommand{\n}{{\mathfrak n}}
\newcommand{\ringR}{\text{$(R,\fm,K)$ }}
\newcommand{\Dim}{{\rm dim}}
\newcommand{\bx}{\mathbf x}
\newcommand{\bv}{\mathbf v}
\newcommand{\bw}{\mathbf w}
\newcommand{\inc}{\subseteq}
\newcommand{\ehk}{\e_{HK}}
\newcommand{\8}{\infty}
\newcommand{\embdim}{{\rm embdim}}
\newcommand{\ux}{{\underline x}}
\newcommand{\ff}{\textrm{ff}}
\newcounter{hours}\newcounter{minutes}
\newcommand{\excise}[1]{}
\title{New estimates of Hilbert-Kunz multiplicities for local rings of fixed dimension} 
\begin{document}
\numberwithin{Thm}{section}
\numberwithin{equation}{section}

\author{Ian M. Aberbach}

\address{Department of Mathematics, University of Missouri, Columbia, MO 65211}\email{aberbachi@missouri.edu}

\author{Florian Enescu}

\address{Department of Mathematics and Statistics, Georgia State University, Atlanta, GA 30303}\email{fenescu@gsu.edu}
\thanks{The second author was partially supported by the Young Investigator 
Grants H98230-07-1-0034 and H98230-10-1-0166 from the National Security Agency.}

\date{}
\begin{abstract}
We present results on the Watanabe-Yoshida conjecture for the Hilbert-Kunz multiplicity of a local ring of positive characteristic.  By improving on a ``volume estimate'' giving a lower bound for Hilbert-Kunz multiplicity, we obtain the conjecture when the ring either has Hilbert-Samuel multiplicity less than or equal to five, or dimension less than or equal to six. For non-regular rings with fixed dimension, a new lower bound for the Hilbert-Kunz multiplicity is obtained.
\end{abstract}

\maketitle

\section{Introduction}\label{introduction}

Let $\ringR$ be a local ring of positive characteristic $p$. If $I$ is an ideal in $R$, then $I^{[q]}=(i^q: i \in I)$, where
$q=p^e$ is a power of the characteristic. For an
$\fm$-primary ideal $I$, one can consider the Hilbert-Samuel multiplicity and
the Hilbert-Kunz multiplicity of $I$ with respect to $R$. 

\begin{Def}
 {\rm Let $I$ be an $\fm$-primary ideal in $(R,\fm)$.

1. {\it The Hilbert-Samuel multiplicity of $R$ at $I$} is defined by $\e (I)=
\e(I,R) := \displaystyle\lim_{n \to \infty} d! \frac{\length(R/I^n)}{n^d}$. The
limit exists and it is positive.

2. {\it The Hilbert-Kunz multiplicity of $R$ at $I$} is defined by $\e _{HK}
(I)= \e _{HK}(I,R): = \displaystyle\lim_{q \to \infty} 
\frac{\length(R/I^{[q]})}{q^d}$.  Monsky has shown that this limit exists and 
is positive.} \end{Def}

It is known that for parameter ideals $I$, one has $\e(I) = \e_{HK}(I)$. The
following sequence of inequalities is also known to hold: $${\rm max} \{ 1,
\frac{1}{d!} \e (I) \} \leq \e_{HK} (I) \leq \e(I)$$ for every $\fm$-primary
ideal $I$.

We call a local ring $R$ {\it formally unmixed} if  
$\hat{R}$ is equidimensional and ${\rm Min}(\hat{R}) =
{\rm Ass}(\hat{R})$, that is, 
$\dim(\hat{R}/P) = \dim(\hat{R})$ for all its minimal primes $P$, and
all associated primes of $\hat{R}$ are minimal. Nagata calls such
rings {\it unmixed}. 
However, throughout our paper, a local
unmixed ring is a local ring $R$ that is equidimensional and ${\rm Min}(R) =
{\rm Ass}(R)$. 

In this paper we will examine lower bounds for formally unmixed nonregular local rings $R$ of dimension $d$ and prime
characteristic $p$.

\begin{Def}
For $d \geq 1$, let $m_d$ be the real numbers such that

$$ \sec (x) + \tan(x) = 1+ \sum_{d=1}^{\infty} m_d x^d,$$
where $ \mid x \mid < \frac{\pi}{2}$. 

\end{Def}

The following conjecture will be central to our paper:

\begin{conj}[Watanabe-Yoshida, see~\cite{WY3}, Conjecture 4.2 for a more general form]
Let $d \geq 1, p > 2$. Let $K= \overline{\mathbf{F}_p}$ and 

$$R_{p, d} = \dfrac{K[[x_0,\ldots x_d]]}{(x_0^2 + \cdots + x_d^2)},$$

Let $(R, \m, K) $ be a formally unmixed nonregular local ring of dimension $d$.

Then $$ \ehk (R) \geq \ehk (R_{p,d})  \geq1 + m_d.$$

\end{conj}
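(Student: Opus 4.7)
The plan is to attack this conjecture by reducing to a structural case and then combining a volume-type lower bound with a case analysis on the Hilbert-Samuel multiplicity $\e(R)$. First I would pass to the completion $\widehat{R}$, noting that both $\ehk$ and the formally unmixed hypothesis are preserved. A standard faithfully flat base change allows me to assume the residue field is algebraically closed without changing $\ehk(R)$, and since $R$ is formally unmixed and nonregular, I get the baseline $\e(R)\geq 2$ to work with.

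The core tool I would develop is a ``volume estimate'' producing a lower bound of the form $\ehk(R) \geq \phi_d(\e(R))$ for some explicit function $\phi_d$. The intuition is that $\length(R/\m^{[q]})$ can be bounded below by counting monomials in a minimal reduction $(x_1,\dots,x_d)$ of $\m$ that fail to lie in $\m^{[q]}$, which translates to a lattice-point/volume computation weighted by $\e(R)$. One would aim to refine earlier estimates of this type (e.g.\ those in previous Aberbach--Enescu work) by extracting additional contributions from socle elements of $R/(x_1^q,\dots,x_d^q)$, or by iterating the estimate along a filtration of $\m/(x_1,\dots,x_d)$.

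Once $\phi_d$ is in hand, the strategy is to verify that $\phi_d(e) \geq 1+m_d$ for all $e$ above some threshold $e_0(d)$, which reduces the conjecture to finitely many small multiplicities. For those residual cases one exploits structure theorems: when $\e(R)=2$ the ring is (after completion and mild reduction) a hypersurface, and one compares $\ehk(R)$ directly with $\ehk(R_{p,d})$ using Han--Monsky's explicit formulas for diagonal hypersurfaces, which also provide the bound $\ehk(R_{p,d}) \geq 1+m_d$ via their connection to the generating function for $\sec(x)+\tan(x)$. For $\e(R)\in\{3,4,5\}$ one uses the classification of small-multiplicity unmixed local rings (Sally-type results) to either compute $\ehk$ outright or bootstrap the volume estimate with finer data.

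The main obstacle, in my view, is the hypersurface case $\e(R)=2$: here the conjectured extremal example $R_{p,d}$ itself has multiplicity $2$, so the volume estimate alone cannot separate $R$ from $R_{p,d}$, and one must argue that among all quadric hypersurfaces the diagonal one minimizes $\ehk$. A secondary obstacle is sharpening $\phi_d$ enough to push $e_0(d)$ down to $6$ (respectively to handle all $d\leq 6$), which requires the improvement to be uniform in $d$ rather than only asymptotic. If the refined $\phi_d$ is strong enough, the dimension-bounded case $d\leq 6$ follows by checking the finitely many remaining $(d,e)$ pairs numerically against $1+m_d$.
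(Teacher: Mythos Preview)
The statement you are attempting to prove is a \emph{conjecture}; the paper does not prove it in general, and indeed it remains open. The paper establishes only the partial cases $d\le 6$ and $\e(R)\le 5$. So there is no ``paper's own proof'' to match, and your proposal cannot succeed as a proof of the full statement.

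That said, your overall strategy is exactly the one the paper uses for those partial results: complete, enlarge the residue field, prove an improved volume estimate (the paper's Theorem~\ref{volehk2}, which allows $s>2$ and valuations $t_i\ge 1$), and then run a case analysis on $\e(R)$. Two points where your plan diverges from what actually works are worth flagging. First, you suggest that Han--Monsky gives $\ehk(R_{p,d})\ge 1+m_d$; in fact the paper notes (Remark~\ref{weak}) that this second inequality is itself \emph{open} for $d\ge 7$---Gessel--Monsky only gives the limit as $p\to\infty$. Second, for the hypersurface/complete intersection cases you propose comparing quadrics directly; the paper instead invokes the Enescu--Shimomoto theorem, which already gives $\ehk(R)\ge\ehk(R_{p,d})$ for any complete intersection, so no minimization over quadrics is needed. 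The genuine obstruction to your plan in high dimension is that the volume function $\e(v_s-rv_{s-1})$ simply does not dominate $1+m_d$ uniformly: for fixed $d$ one must tabulate finitely many $(\e,s)$ pairs (as the paper does for $d=5,6$), and no argument in the paper pushes this through for all $d$. Your ``if $\phi_d$ is strong enough'' is precisely the unresolved point.
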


\begin{Rem}
\label{weak}
The reader should note that the statement $\ehk (R_{p,d})  \geq1 + m_d$ is part of the conjecture. 

This is known for $d \leq 6$, due to Yoshida~\cite{Y}. In fact,
$\ehk(R_{p,5}) = \frac{17p^2+12}{15p^2+10} > m_5=\frac{17}{15}$ and $\ehk(R_{p,6}) = \frac{781p^4+656p^2+315}{720p^4+570p^2+270} > m_6=\frac{781}{720}.$

Therefore the inequality conjectured by Watanabe and Yoshida includes two inequalities: a stronger one 
\begin{equation}
\label{wy1}
 \ehk (R) \geq \ehk (R_{p,d})
\end{equation}
and a weaker one, namely
\begin{equation}
\label{wy2}
 \ehk (R) \geq 1+m_d.
\end{equation}

As far as we know, the inequality $\ehk (R_{p,d})  \geq1 + m_d$ is open for $d \geq 7$.

\end{Rem}

\begin{Rem}
\label{monsky}
Gessel and Monsky have shown (see ~\cite{M}, or Theorem 4.1 in~\cite{WY3}) that 

$$ \lim_{p \to \infty} \ehk (R_{p, d}) = 1 +m_d,$$
for $d \geq 2$.
\end{Rem}

Watanabe and Yoshida have proved this conjecture in dimension $3, 4$. The cases $d=1 ,2$ are also known.

In higher dimensions, it was not known until recently whether or not for a fixed dimension $d$ there exists a lower 
bound, say $C(d) > 1$, such that every local formally unmixed nonregular ring $R$ satisfies $\ehk(R) \geq C(d)$. We have shown
the existence of such lower bound in~\cite{AE}.

\begin{Rem}
If $R$ is a complete intersection of dimension $d \geq 1$ and $p >2 $, then Enescu and Shimomoto (\cite{ES}) have proved that

$$ \ehk(R) \geq \ehk(R_{p,d}).$$

\end{Rem}

In this paper we will develop techniques which will produce improved estimates for Hilbert-Kunz multiplicities of local rings.
In Section 3 we will extend an inequality of Watanabe and Yoshida that gives a lower bound for the Hilbert-Kunz multiplicity
of a local ring $R$ in terms of a volume function. In Section 4 we will apply this inequality to prove the Watanabe-Yoshida conjecture
for rings of Hilbert-Samuel multiplicity at most $5$. Section 5 will provide an asymptotic solution to the above mentioned conjecture
for rings of dimension $5$ and $6$. Furthermore, Section 6 will sharpen the lower bound for the Hilbert-Kunz multiplicity
of a local ring $R$ provided in~\cite{AE} in all dimensions.

Shortly after this paper was posted to the arXiv (arXiv:1101.5078), O. Celikbas, H. Dao, C. Huneke, and Y. Zhang posted a manuscript that obtains a lower bound
of the Hilbert-Kunz multiplicity of a $d$-dimensional ring that improves our bound in certain important cases. Their approach starts with an analysis of radical extensions
like in Section 6 of this paper, however it is a different than ours, and uses along the way new inequalities that are very interesting in their own right.

{\bf Acknowledgement:} The authors are indebted to the anonymous referee for several important suggestions and corrections that improved the manuscript. Notably, the referee observed that Yoshida's results in \cite{Y} can be used to improve our Theorem~\ref{dim5}. Some of the computations in the paper were performed with Wolfram Mathematica~\cite{WM}.

\section{Notations, terminology and background}

First we would like to review some definitions and results that will be useful
later. Throughout the paper $R$ will be a Noetherian ring containing a field of 
characteristic $p$, where $p$ is prime.
Also, $q$ will denote $p^e$, a varying power of $p$.

If $I$ is an ideal in $R$, then $I^{[q]}=(i^q: i \in I)$, where
$q=p^e$ is a power of the characteristic. Let $R^{\circ} = R \setminus
\cup P$, where $P$ runs over the set of all minimal primes of $R$. An
element $x$ is said to belong to the {\it tight closure} of the ideal
$I$ if there exists $c \in R^{\circ}$ such that
$cx^q \in I^{[q]}$ for all sufficiently large $q=p^e$. The
tight closure of $I$ is denoted by $I^\ast$. By a ${\it parameter \
ideal}$ we mean here an ideal generated by a full system of parameters
in a local ring $R$. A tightly closed ideal of $R$ is an ideal $I$
such that $I = I^*$.

Let $F:R \to R$ be the Frobenius homomorphism $F(r)=r^p$. We denote by
$F^e$ the $e$th iteration of $F$, that is $F^e(r) = r^{q}$, $F^e:R
\to R$. One can regard $R$ as an $R$-algebra via the homomorphism
$F^e$. Although as an abelian group it equals $R$, it has a different
scalar multiplication. We will denote this new algebra by $R^{(e)}$.

\begin{Def}
 $R$ is \emph{F-finite}  if $R^{(1)}$ is module finite over $R$, or, 
equivalently (in the case that $R$ is reduced),
 $R^{1/p}$ is module finite over $R$.  $R$ is called 
\emph{F-pure} if  the Frobenius homomorphism is a pure map, i.e,  
 $F \otimes_R M$ is injective for every $R$-module $M$.
\end{Def}

If $R$ is F-finite, then $R^{1/q}$ is module finite over $R$, for
every $q$. Moreover, any quotient and localization of an F-finite
ring is F-finite. Any finitely generated algebra over a perfect
field is F-finite. An F-finite ring is excellent.

\begin{Def}
A reduced Noetherian F-finite ring $R$ is \emph{strongly F-regular}  if 
for every $c \in R^0$ there exists $q$ such that the $R$-linear map $R \to R^{1/q}$ that sends $1$ to $c^{1/q}$ 
splits over $R$, or equivalently $Rc^{1/q} \subset R^{1/q}$ splits over $R$.
\end{Def}

The notion of strong F-regularity localizes well, and all ideals are tightly
closed in strongly F-regular rings. Regular  rings are strongly F-regular
and  strongly F-regular rings are Cohen-Macaulay and normal.

Let $E_R(K)$ denote the injective hull of the residue field of  a local ring $\ringR$. 

\begin{Def} A ring $R$ is called F-rational if all parameter ideals are
tightly closed. A ring $R$ is called weakly F-regular if all ideals are 
tightly
closed. The ring $R$ is F-regular if and only if $S^{-1}R$ is weakly
F-regular for all multiplicative sets $S \subset R$. 
\end{Def}

Regular rings are (strongly) F-regular.  For Gorenstein rings, the notions
of F-rationality and F-regularity coincide (and if in addition the
ring is excellent, these coincide with strong F-regularity).

Our work will rely on a number of inequalities that involve the Hilbert-Kunz multiplicity obtained in~\cite{AE} via 
duality theory, so we will state them here all together.

\begin{Thm}
\label{duality}

Let $(R, \m, K)$ be a local ring of dimension $d$ and characteristic $p$, where $p$ is prime.

(i) Assume that $R$ is Cohen-Macaulay of type $t$. Then

$$ \ehk(R) \geq \dfrac{\e(R)}{\e(R) - t +1}.$$

(ii) Assume that $R$ is Gorenstein of embedding dimension $\nu = \mu(\m)$. If $R$ or $\widehat{R}$  is not
F-regular then

$$ \ehk(R) \geq \dfrac{\e(R)}{\e(R) - \nu +d}.$$

(iii) Assume that $R$ is formally unmixed and $d \geq 2$. 

If $$ \ehk(R) < \dfrac{\e(R)}{\e(R)-1},$$
then $R$ is Gorenstein. Also, $R$ and $\widehat{R}$ are F-regular.

(iv) If $R$ is Cohen-Macaulay and has minimal multiplicity, i.e. $\nu = \e(R) +d -1$, then

$$ \ehk(R) \geq \dfrac{\e(R)}{2}.$$

\end{Thm}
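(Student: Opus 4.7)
The plan is to follow the local-duality strategy of~\cite{AE}. After passing to a faithfully flat extension with infinite residue field if needed, choose a minimal reduction $J = (x_1, \ldots, x_d)$ of $\fm$ generated by a system of parameters. For part (i), the Cohen--Macaulay hypothesis gives $\length(R/J^{[q]}) = q^d\,\e(R)$, and via the canonical-module identification $(J^{[q]}:\fm)/J^{[q]} \cong (J:\fm)/J$, the socle dimension of $R/J^{[q]}$ is $t$ for every $q$. The crux is an upper bound
\[
\length(\fm^{[q]}/J^{[q]}) \;\leq\; q^d\,\e(R)\,\frac{\e(R)-t}{\e(R)-t+1} + O(q^{d-1}),
\]
obtained by Matlis-dualising $R/\fm^{[q]} \cong (0 :_{E_R(K)} \fm^{[q]})$ and decomposing along a socle filtration of $R/J^{[q]}$: the top $t$-dimensional socle layer, being annihilated by $\fm$, forces a fixed proportion $1/(\e(R)-t+1)$ that cannot be absorbed into $\fm^{[q]}/J^{[q]}$. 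Passing to the Hilbert--Kunz limit then yields the bound in (i).

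For (ii) the Gorenstein hypothesis gives $\omega_R \cong R$, so the duality becomes self-dual and the generic type is $1$; failure of F-regularity of $R$ or $\widehat R$ produces a nonzero $c \in I^* \setminus I$ for some parameter ideal $I$, and threading $c$ through the duality argument replaces the type with a quantity governed by the embedding dimension $\nu$, yielding the denominator $\e(R) - \nu + d$. For (iii), I would argue contrapositively: the formally unmixed hypothesis says $\widehat R$ is unmixed, an auxiliary multiplicity estimate (also in~\cite{AE}) rules out the non-Cohen--Macaulay case whenever $\ehk(R) < \e(R)/(\e(R)-1)$, and then (i) forces $t = 1$, so $\widehat R$ is Gorenstein. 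The F-regularity claim then follows from (ii), since a non-regular Gorenstein ring has $\nu \geq d+1$ and (ii) would then give $\ehk(R) \geq \e(R)/(\e(R)-1)$, contradicting the hypothesis. Part (iv) is a direct corollary of (i): minimal multiplicity $\nu = \e(R)+d-1$ in the Cohen--Macaulay case forces $\fm^2 \subseteq J$ for a minimal reduction $J$, hence $\Soc(R/J) = \fm/J$ has dimension $\e(R)-1$, so $t = \e(R)-1$ and (i) yields $\ehk(R) \geq \e(R)/2$.

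The main obstacle is the socle bookkeeping in (i): precisely controlling how the socle of $R/J^{[q]}$ meets $\fm^{[q]}$ requires both the canonical-module isomorphism and a careful graded-filtration argument that identifies which socle directions must stay outside $\fm^{[q]}$, and in particular gives the sharp constant $1/(\e(R)-t+1)$ rather than the naive $1/(\e(R)-t)$. The non-Cohen--Macaulay step of (iii) is also subtle, since (i) is stated only in the Cohen--Macaulay case and an independent multiplicity-based argument is needed to bridge from formally unmixed to Cohen--Macaulay when $\ehk$ is small.
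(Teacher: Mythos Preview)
Your overall logical structure matches the paper's proof exactly: parts (i), (ii), (iv) are taken from \cite{AE} (Corollaries~3.3, 3.7, and 3.4 respectively), and part (iii) is deduced by first reducing to the Cohen--Macaulay case, then applying (i) to force $t=1$, then applying (ii) with $\nu\ge d+1$ to force F-regularity. One correction: the reduction to Cohen--Macaulay in (iii) is not an ``auxiliary multiplicity estimate in \cite{AE}'' but rather the result of Blickle--Enescu (Remark~1.3 in \cite{BE}) that a formally unmixed ring with sufficiently small Hilbert--Kunz multiplicity is Cohen--Macaulay; the paper invokes \cite{BE} explicitly here.

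A secondary remark: the paper does not re-derive (i), (ii), (iv) at all---it simply cites \cite{AE}. Your sketches of those arguments are in the right spirit, but the ``socle bookkeeping'' you flag as the main obstacle for (i) is indeed the entire content of \cite{AE}, Corollary~3.3, and your description of it (``a fixed proportion $1/(\e(R)-t+1)$ that cannot be absorbed'') is too vague to stand as a proof. If you intend to reproduce that argument rather than cite it, you will need the precise filtration and duality computation from \cite{AE}; otherwise, citing suffices, as the paper does.
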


\begin{proof}
Part (i) is Corollary 3.3 in~\cite{AE}. Part (ii) is Corollary 3.7 in~\cite{AE}. Part (iv) is Corollary 3.4 in~\cite{AE}.

For part (iii), by a result of Blickle and Enescu (see for example, Remark 1.3 in~\cite{BE}), we obtain that
$R$ is Cohen-Macaulay. If the type of $R$ is greater than $1$ then part (i) above gives a contradiction.
So, $R$ is Gorenstein and then part (ii) finishes the proof, as $\nu \geq d+1.$

\end{proof}

\section{Volume estimates for Hilbert-Kunz multiplicity lower bounds}

A geometric formula first articulated by Watanabe and Yoshida in \cite{WY3} gives a great deal
of information, especially in small dimension.  We will give an improved
version of their formula here.

For any real number $s$, set
\begin{equation*}
v_s = \textrm{vol}\left\{(x_1,\ldots, x_d) \in [0,1]^d\bigg| \sum_{i=1}^d
x_i \le s\right\}
\end{equation*}

Here ``vol'' denotes the Euclidean volume of a subset of $\mathbb{R}^d$.
In fact, an explicit formula for $v_s$, which is due to P\'olya and can be traced to Laplace (see formula (16) on page 233 in~\cite{CL}\footnote{We thank A. Koldobskiy for providing this reference to us}),  is
\begin{equation*}
v_s = \sum_{n=0}^{\lfloor s \rfloor} (-1)^n \dfrac{(s-n)^d}{n!(d-n)!}
\end{equation*}

\begin{Thm}[c.f., \cite{WY3}, Theorem 2.2]\label{volehk}
Let $(R,\m,K)$ be a formally unmixed local ring of characteristic $p>0$ and
dimension $d$.  Let $J$ be a minimal reduction of $\m$, and let $r$
be an integer with $r \ge \mu_R(\m/J^*)$.  Let $s \ge 1$ be a rational
number.  Then
\begin{equation}
\e_{HK}(R) \ge \e(R)\left\{ v_s - r v_{s-1}\right\}.
\end{equation}
\end{Thm}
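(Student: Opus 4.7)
The plan is to adapt the approach of Watanabe--Yoshida (\cite{WY3}, Theorem~2.2), strengthened by replacing $\mu_R(\m/J)$ with the potentially smaller invariant $\mu_R(\m/J^*)$. The argument begins by choosing $y_1,\dots,y_r\in\m$ whose images generate $\m/J^*$, so that $\m=J^*+(y_1,\dots,y_r)$. Raising to $q$-th powers and applying the standard containment $(J^*)^{[q]}\subseteq(J^{[q]})^*$ yields
$$\m^{[q]}\;\subseteq\;(J^{[q]})^*+(y_1^q,\dots,y_r^q).$$

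Set $K_q:=(J^{[q]})^*+\m^{\lceil sq\rceil}$. Since enlarging an ideal only decreases the colength, and since $\lambda((I+(a_1,\dots,a_r))/I)\le\sum_i\lambda(R/(I:a_i))$ by the standard subadditivity estimate, one obtains
$$\lambda(R/\m^{[q]})\;\ge\;\lambda(R/K_q)\;-\;\sum_{i=1}^{r}\lambda(R/(K_q:y_i^q)).$$
The proof then reduces to the two asymptotics $\lambda(R/K_q)/q^d\to\e(R)v_s$ and $\limsup_q \lambda(R/(K_q:y_i^q))/q^d\le\e(R)v_{s-1}$, both of which rest on the monomial-counting identity $\lambda(R/(J^{[q]}+\m^{\lceil tq\rceil}))/q^d\to\e(R)v_t$ (the essential ingredient in the Watanabe--Yoshida proof, obtained by counting lattice points in $[0,q)^d\cap\{\sum a_i<tq\}$ and multiplying by $\e(R)=\e(J)$ via the associated graded ring of $\m$). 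For the first asymptotic, formal unmixedness of $R$ gives $\lambda((J^{[q]})^*/J^{[q]})=o(q^d)$, so $K_q$ differs from $J^{[q]}+\m^{\lceil sq\rceil}$ by $o(q^d)$ in length. For the second, the key containment
$$J^{[q]}+\m^{\lceil(s-1)q\rceil}\;\subseteq\;(K_q:y_i^q)$$
holds because $J^{[q]}y_i^q\subseteq J^{[q]}\subseteq K_q$ and, for $x\in\m^{\lceil(s-1)q\rceil}$, one has $xy_i^q\in\m^{\lceil(s-1)q\rceil+q}=\m^{\lceil sq\rceil}\subseteq K_q$; the monomial-counting identity applied with $t=s-1$ then gives the desired bound.

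The main obstacle is producing the clipped volume $v_{s-1}$, rather than the simplex volume $(s-1)^d/d!$, in the colon bound. Using only the naive containment $\m^{\lceil(s-1)q\rceil}\subseteq(K_q:y_i^q)$ would yield $\e(R)(s-1)^d/d!$, which exceeds $\e(R)v_{s-1}$ once $s-1>1$ and may even exceed $\e(R)$; including $J^{[q]}$ in the containment is exactly what restricts the surviving monomial count to the cube $[0,q)^d$, producing $v_{s-1}$ in place of $(s-1)^d/d!$. Dividing by $q^d$ and letting $q\to\infty$ then gives $\ehk(R)\ge\e(R)(v_s-rv_{s-1})$.
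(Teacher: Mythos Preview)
Your argument is correct and follows essentially the same route as the paper's: choose generators of $\m$ modulo $J^*$, bound $\length(R/\m^{[q]})$ from below by a main term minus a sum of colon terms, and identify each limit via the volume lemma (Lemma~\ref{reductionvolume}). The paper actually deduces Theorem~\ref{volehk} from the more general Theorem~\ref{volehk2}, but specialized to $I=\m$ the skeleton is identical to yours. Two cosmetic differences are worth noting: the paper works with powers $J^{sq}$ of the parameter ideal rather than $\m^{\lceil sq\rceil}$ (these agree asymptotically since $J$ is a reduction of $\m$, so $J^n\subseteq \m^n\subseteq J^{n-N}$ for fixed $N$ and $v_s$ is continuous in $s$), and the paper controls the error via $(J^*)^{[q]}$ and Hochster--Huneke's $O(q^{d-1})$ bound, whereas you use $(J^{[q]})^*$; your bound $\length((J^{[q]})^*/J^{[q]})=o(q^d)$ is also valid, e.g.\ via a test element $c$ giving $(J^{[q]})^*\subseteq J^{[q]}:c$ and $\length((J^{[q]}:c)/J^{[q]})=\length(R/(J^{[q]}+(c)))=O(q^{d-1})$.
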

%
%
%

Theorem~\ref{volehk} is an improvement over Watanabe and Yoshida's
theorem when the maximum volume occurs for a value of $s > 2$.
Theorem~\ref{volehk} can be made considerably more general.

Fix an
ideal $J$ in an analytically unramified local ring $(R,\m)$.  For an element $x \in R$, set 
$v_J(x) = \sup\{k| x \in J^{k}\}$.
We can then set $f_J(x) = \lim_{n\to \8} \dfrac{v_J(x^n)}{n}$.  By
work of Rees \cite{Re},  the number $f_J(x)$ is rational, and is
the same for any ideal with the same integral closure as $J$.

\begin{Thm}\label{volehk2}
Let $(R,\m,K)$ be a formally unmixed local ring of characteristic
$p >0$ and dimension $d \geq 1$.  Let $J$ be a parameter ideal with $e=e(J)$.  Fix $I \supseteq J^*$ and let $r = \mu_R(I/J^*)$.  Let
$z_1,\ldots, z_r$ be minimal generators of $I$ modulo $J^*$, and
let $t_i = f_J(z_i)$.  For any rational number $s \ge 0$,
\begin{equation}\label{volehk2formula}
\e_{HK}(I) \ge e(v_s - \sum_{i=1}^r v_{s-t_i}).
\end{equation}
\end{Thm}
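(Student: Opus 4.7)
The plan is to extend the volume-counting argument of Watanabe--Yoshida (Theorem~\ref{volehk}) to arbitrary parameter ideals and to any overideal of $J^*$. The first observation, which uses that Frobenius is additive in characteristic $p$, is the clean decomposition
\[
I^{[q]} \;=\; J^{*[q]} + (z_1^q,\ldots,z_r^q).
\]
Indeed, any element of $I$ may be written $a + \sum c_j z_j$ with $a\in J^*$, and its $q$th power equals $a^q + \sum c_j^q z_j^q$. Next, for each rational $s\geq 0$ introduce the auxiliary ideal $K_q := J^{\lceil sq\rceil} + J^{[q]}$, for which the key asymptotic is
\[
\lim_{q\to\infty} \frac{\lambda(R/K_q)}{q^d} \;=\; e\cdot v_s \qquad (\star).
\]
Heuristically, $R/J^{[q]}$ behaves like a module with ``monomial basis'' on the cube $[0,q)^d$ of density $e$, and modding out additionally by $J^{\lceil sq\rceil}$ restricts to the simplex $\{x_i\ge 0,\ \sum x_i \le s\}$ after rescaling; this is exactly the content of the P\'olya volume $v_s$, weighted by $e$.

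The main estimate then proceeds by the telescoping
\[
\lambda(R/I^{[q]}) \;\ge\; \lambda\bigl(R/(I^{[q]}+K_q)\bigr) \;=\; \lambda(R/K_q) - \lambda\bigl((I^{[q]}+K_q)/K_q\bigr),
\]
and from the decomposition of $I^{[q]}$ the second term is at most
\[
\lambda\bigl((J^{*[q]} + K_q)/K_q\bigr) \;+\; \sum_{i=1}^r \lambda\bigl((z_i^q R + K_q)/K_q\bigr).
\]
The $J^{*[q]}$ contribution is $o(q^d)$ because the formally unmixed hypothesis forces $\ehk(J^*) = \ehk(J) = e$, hence $\lambda(J^{*[q]}/J^{[q]})/q^d \to 0$. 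For each $z_i$, Rees's theorem identifies $f_J(z_i)=t_i$ with $\lim_q v_J(z_i^q)/q$, so setting $n_i(q) := \lceil sq\rceil - v_J(z_i^q)$ one has $z_i^q \cdot (J^{n_i(q)} + J^{[q]}) \subseteq K_q$, whence multiplication by $z_i^q$ induces a surjection
\[
R\big/\bigl(J^{n_i(q)} + J^{[q]}\bigr) \;\twoheadrightarrow\; (z_i^q R + K_q)/K_q.
\]
Since $n_i(q)/q \to s-t_i$, applying $(\star)$ with parameter $s-t_i$ (interpreting $v_u = 0$ for $u\le 0$) bounds the right-hand length by $e\,v_{s-t_i}\,q^d + o(q^d)$.

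Assembling these bounds, dividing through by $q^d$, and passing to $q\to\infty$ produces exactly
\[
\ehk(I) \;\ge\; e\Bigl(v_s - \sum_{i=1}^r v_{s-t_i}\Bigr).
\]
The main obstacle is establishing the volume asymptotic $(\star)$ carefully for the mixed filtration $J^n + J^{[q]}$ as $n,q\to\infty$ with $n/q\to s$, \emph{without} assuming $R$ is Cohen--Macaulay. This requires reconciling the Hilbert--Samuel leading term $\lambda(R/J^n)\sim (e/d!)n^d$ with the Hilbert--Kunz leading term $\lambda(R/J^{[q]})\sim eq^d$ into a single combined asymptotic whose leading coefficient is $e\,v_{n/q}$, and crucially invoking the formally unmixed hypothesis (via $\ehk(J)=e$ and the Hilbert--Samuel multiplicity of parameter ideals) to guarantee that all error terms are uniformly $o(q^d)$ across the relevant range of $n$.
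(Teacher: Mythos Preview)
Your proof is correct and takes essentially the same approach as the paper's: both decompose $I^{[q]}$ via $(J^*)^{[q]}$ and the $z_i^q$, compare with the auxiliary ideal $J^{sq}+J^{[q]}$, dispose of the $(J^*)^{[q]}/J^{[q]}$ error term (the paper uses the sharper $O(q^{d-1})$ bound from \cite[Theorem~8.17(a)]{HH1}, but your $o(q^d)$ via $\ehk(J^*)=\ehk(J)$ suffices), and bound each $z_i^q$-contribution using Rees's asymptotic valuation together with the volume asymptotic $(\star)$. The one clarification is that what you call ``the main obstacle'' $(\star)$ is not an obstacle at all but an established result---it is Lemma~2.3 of \cite{WY}, restated in the paper as Lemma~\ref{reductionvolume}---so no independent argument for it is needed.
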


In order to prove Theorem~\ref{volehk2} we will need Lemma 2.3 of
\cite{WY} (where, for any non-negative real number $\alpha$, we
define $I^\alpha = I^{\lfloor \alpha \rfloor}$):
\begin{Lma}\label{reductionvolume}
Let $(R,\m,K)$ be a formally unmixed local ring of characteristic $p >0$ with
$d = \dim R \ge 1$.  Let $J$ be a parameter ideal of $R$.  Then for
any rational number $s$ with $0 \le s \le d$
\[
\lim_{q\to\8} \dfrac{\length(R/J^{sq})}{q^d} = 
\dfrac {\e(J)s^d}{d!}, \text{\ and } 
\lim_{q\to\8} \dfrac{\length(R/(J^{sq} + J\brq))}{q^d} = \e(J)v_s.
\]
\end{Lma}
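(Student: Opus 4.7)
Part 1 is a direct consequence of the Hilbert--Samuel polynomial: since $J$ is $\fm$-primary, $\length(R/J^n)$ agrees with a polynomial of degree $d$ with leading coefficient $\e(J)/d!$ for $n\gg 0$. Substituting $n=\lfloor sq\rfloor$, dividing by $q^d$, and using $\lfloor sq\rfloor/q\to s$ yields $\e(J)s^d/d!$ in the limit.

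For part 2, I would first reduce to the complete case (harmless since lengths of $\fm$-primary quotients and $\e(J)$ are preserved under completion, and $\widehat R$ is unmixed by hypothesis) and invoke Cohen's structure theorem: there is a coefficient field $K$ and a finite ring extension $A:=K[[y_1,\ldots,y_d]]\hookrightarrow R$, where $y_1,\ldots,y_d$ is a minimal generating set of $J$, so that $J=\fm_A R$. Unmixedness forces every minimal prime of $R$ to contract to $(0)\subset A$, so $R$ is torsion-free over $A$ of some generic rank $r$, and the associativity formula for multiplicity gives $r=\e(\fm_A R,R)=\e(J)$. In the regular ring $A$ the claims are explicit: the quotient $A/((y)^{\lfloor sq\rfloor}+(y)^{[q]})$ has $K$-basis $\{y^a : a_i<q,\ |a|<\lfloor sq\rfloor\}$, so its length equals $\#\{a\in\mathbb{Z}_{\ge 0}^d : a_i<q,\ |a|<sq\}$, and dividing by $q^d$ gives a Riemann sum converging to $v_s$; similarly $\length_A(A/(y)^{\lfloor sq\rfloor})=\binom{\lfloor sq\rfloor+d-1}{d}$ converges after division by $q^d$ to $s^d/d!$.

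To transfer these regular-ring limits to $R$, I would take a prime filtration $0=M_0\subset\cdots\subset M_N=R$ as $A$-modules, with $M_i/M_{i-1}\cong A/\fp_i$: torsion-freeness forces exactly $r$ of the $\fp_i$ to equal $(0)$, and the remaining ones satisfy $\dim A/\fp_i\le d-1$. Setting $\chi^A(-,A/\fa):=\sum_j(-1)^j\length_A\tor_j^A(-,A/\fa)$, additivity over the filtration together with $\chi^A(A,A/\fa)=\length_A(A/\fa)$ gives
\[
\length_R(R/\fa R) \;=\; r\cdot\length_A(A/\fa) \;+\; O(q^{d-1})
\]
for $\fa=(y)^{\lfloor sq\rfloor}$ and $\fa=(y)^{\lfloor sq\rfloor}+(y)^{[q]}$; dividing by $q^d$ and combining with the computations in $A$ then produces both limit formulas via $r=\e(J)$. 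The main obstacle is the $O(q^{d-1})$ bound on the error terms $\length_A\tor_j^A(A/\fp_i,A/\fa)$ for $\fp_i\ne(0)$: the $j=0$ case is easy because $\dim A/\fp_i\le d-1$ yields Hilbert--Samuel growth of order at most $q^{d-1}$, while for $j\ge 1$ I would compute $\tor_*^A(A/\fp_i,A/(y)^{[q]})$ via the Koszul resolution on the regular sequence $y_1^q,\ldots,y_d^q$ in $A$, obtaining Koszul homology of $A/\fp_i$ whose length is $O(q^{\dim A/\fp_i})=O(q^{d-1})$ by a standard Hilbert--Kunz-style bound, and then propagate to $\tor_j^A(A/\fp_i,A/\fa)$ via the short exact sequence $0\to\fa/(y)^{[q]}\to A/(y)^{[q]}\to A/\fa\to 0$. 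Unmixedness is essential precisely at this step: it ensures that every non-$(0)$ prime in the filtration has strictly smaller $A$-dimension, so no such prime can contribute at the leading $q^d$ order.
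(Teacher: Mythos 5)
The paper itself gives no proof of this lemma; it is quoted from Watanabe--Yoshida (\cite{WY}, Lemma 2.3), so your argument must stand on its own. Your overall strategy is the standard one and is sound in outline: part 1 from the Hilbert--Samuel polynomial; complete, take a coefficient field and a finite Noether normalization $A=K[[y_1,\ldots,y_d]]\hookrightarrow \widehat R$ with $J\widehat R=\fm_A\widehat R$ (lengths over $A$ and $\widehat R$ agree since the residue fields coincide); count monomials in $A$ to get $v_s$; transfer to $\widehat R$ up to $O(q^{d-1})$ using generic rank $r=\e(J)$. The genuine gap is in the key error estimate for $j\ge 1$. You propose to bound $\length\tor{}_j^A(A/\fp_i,A/\fa_q)$ for $\fa_q=\fm_A^{\lfloor sq\rfloor}+\fm_A\brq$ by first treating $A/\fm_A\brq$ via the Koszul complex (fine) and then ``propagating'' through $0\to\fa_q/\fm_A\brq\to A/\fm_A\brq\to A/\fa_q\to 0$. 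That propagation does not go through as stated: the long exact sequence introduces the terms $\tor{}_{j-1}^A(A/\fp_i,\fa_q/\fm_A\brq)$ (for $j=1$, a piece of $(A/\fp_i)\otimes_A(\fa_q/\fm_A\brq)$), and nothing in your sketch controls these. The module $\fa_q/\fm_A\brq$ has length of order $q^d$ and roughly $q^{d-1}$ generators, so the obvious bounds give only $O(q^{2d-2})$. The estimate you need is true, but it requires its own argument; for instance the generic null-homotopy trick: since $(A/\fp_i)_{(0)}=0$, a finite free resolution $P_\bullet$ of $A/\fp_i$ admits, for some $0\ne c\in A$, a homotopy with $ds+sd=c\cdot\operatorname{id}$ in positive degrees, and then for \emph{any} ideal $\fb\supseteq\fm_A\brq$ one gets $\length\tor{}_j^A(A/\fp_i,A/\fb)\le \operatorname{rank}(P_j)\cdot\length\bigl(A/(\fb+cA)\bigr)=O(q^{d-1})$, uniformly and with no case distinction between Frobenius powers and the mixed ideals $\fa_q$.

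Two smaller points. Your parenthetical that unmixedness is what forces the non-zero primes in the filtration to have dimension $\le d-1$ is off: that is automatic because $A$ is a domain. What unmixedness actually buys is that $\widehat R$ is torsion-free over $A$, and this permits a proof that avoids Tor altogether (and is closer to the Watanabe--Yoshida original): choose embeddings $A^{r}\subseteq\widehat R\subseteq A^{r}$ with both cokernels of dimension $\le d-1$ (the second embedding needs torsion-freeness, obtained by clearing denominators in $\widehat R\subseteq\widehat R\otimes_A\operatorname{Frac}(A)\cong\operatorname{Frac}(A)^{r}$), tensor with $A/\fa_q$, and use only right-exactness of the tensor product together with your easy $j=0$ Hilbert--Samuel bound on the cokernels to get $\bigl|\length(\widehat R/\fa_q\widehat R)-r\,\length(A/\fa_q)\bigr|=O(q^{d-1})$ in both directions. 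Either repair (null-homotopy or the sandwich) completes your argument; as written, the Tor-propagation step is a real gap.
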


Theorem~\ref{volehk} follows from Theorem~\ref{volehk2} by taking
$I = \m$, $J$ a minimal reduction of $\m$, and noting that for
any minimal generator of $\m$, the valuation is at least $1$.

\begin{proof}[Proof of Theorem~\ref{volehk2}]
We can apply Theorem 8.17 (a) in~\cite{HH1} to remark that  $\length((B^*)\brq/B\brq) = O(q^{d-1})$.

Let us note that $I = (z_1, \ldots, z_r) + J^*$. 
  
The proof now follows from an examination of the inequality
\begin{align*}
\length\left(\dfrac R{I\brq}\right) 
            &\ge \length \left(\dfrac R{(z_1, \ldots, z_r)\brq + J^{sq}+ (J^*)\brq}\right) \\
&= \length\left(\dfrac R{(z_1,\ldots, z_r)\brq+ J\brq + J^{sq}}\right) 
   -\length\left(\dfrac{I\brq+ (J^*)\brq + J^{sq}}{I\brq+ J\brq + J^{sq}}\right)
\\&=\length\left(\dfrac R{(z_1,\ldots,z_r)\brq +J\brq + J^{sq}}\right)+O(q^{d-1})\\
&\ge  \length\left(\dfrac R{J^{sq}+J\brq}\right) 
           - \left(\sum_{i=0}^{r-1}\length\left(\dfrac{(z_1,\ldots,z_{i+1})\brq
             +J^{sq}+J\brq}{(z_1,\ldots, z_i)\brq +J^{sq}+J\brq}\right)\right)
+O(q^{d-1})\\
&\ge  \length\left(\dfrac R{J^{sq}+J\brq}\right) -
 \left(\sum_{i=0}^{r-1}\length\left(\dfrac{R}
            {(J^{sq}+J\brq):z_{i+1}^q}\right)\right)
+O(q^{d-1})
\end{align*}

For $N=1,2 , \dots$, let $\epsilon_N = \frac{1}{p^N}$ and choose $q_0 > p^N$ such that for all $q \geq q_0$ we have

$$ \mid \frac{v_J(z_{i+1}^q)}{q} - t_{i+1} \mid < \epsilon_N.$$

Fix $N$. For $q \geq q_0$ we then have $v_J(z_{i+1}^q) \geq \lceil (t_{i+1} -\epsilon_n)q_i\rceil$ and so $z_{i+1}^q \in J ^{ \lceil (t_{i+1} -\epsilon_n)q \rceil} = J^{ \lceil t_{i+1} q  \rceil -\epsilon_n q}$.

It follows that $z_{i+1}^q J^{(s-t_{i+1})q} \subseteq J^{sq-\epsilon_N q}, $ and hence $z_{i+1} ^q J^{sq} \subseteq J^{(s-\epsilon_N +t_{i+1})q}.$

Therefore, 
$$\length\left(\dfrac{R}{(J^{sq}+J\brq):z_{i}^q}\right)  \leq \length\left(\dfrac{R}{(J^{(s-t_{i+1} +\epsilon_N)q}+J\brq)}\right).$$

So,

$$\length\left(\dfrac R{I\brq}\right)  \geq \length\left(\dfrac R{J^{sq}+J\brq}\right) -
 \left(\sum_{i=0}^{r-1}\length\left(\dfrac{R}
            {(J^{(s-t_{i+1}+\epsilon_N)q}+J\brq}\right)\right)
+O(q^{d-1}).$$

Dividing each term in the last inequality obtained by $q^d$, taking limits as $q\to \8$,
and applying Lemma~\ref{reductionvolume} to each term plus the fact that $\lim_{\epsilon \to 0} v_{s-\epsilon} = v_s$ yields equation~\ref
{volehk2formula}.
\end{proof}

\begin{Rem}
This result also extends Fact 2.4 in~\cite{WY3}.
\end{Rem}

\section{Lower bounds for rings with small Hilbert-Samuel multiplicity}

In this section we will apply Theorem~\ref{volehk2} to provide lower bounds for the Hilbert-Kunz multiplicity of formally 
unmixed local ring of Hilbert-Samuel multiplicity less or equal to $5$. 

We note that $$1+m_3=\frac{4}{3}, 1+m_4 = \frac{29}{24}, 1+m_5 = \frac{17}{15}, 1+m_6 = \frac{781}{720} = 1.0847.$$

\begin{Thm}
\label{mult=3}
Let $(R,\m,K)$ be a Cohen-Macaulay local ring such that $\e(R) = 3$ and $R$ is
not a complete intersection.  Then $\e_{HK}(R) \ge 13/8$.
\end{Thm}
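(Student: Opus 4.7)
\medskip

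The first step is to extract the structural consequences of the hypothesis. Since $R$ is Cohen-Macaulay with $\e(R)=3$, Abhyankar's bound gives $\mu_R(\m) \le d+2$ (after a faithfully flat base change ensuring $K$ is infinite, which preserves all hypotheses and $\ehk$). The cases $\mu_R(\m) \in \{d,d+1\}$ force $R$ to be either regular (so $\e=1$) or a hypersurface (so CI), contradicting the hypotheses; hence $\mu_R(\m)=d+2$, i.e., $R$ has minimal multiplicity, so $\m^2=J\m$ for every minimal reduction $J$ of $\m$. The associated graded $h$-vector is $(1,2)$, which forces the Cohen-Macaulay type of $R$ to equal $2$, so $R$ is not Gorenstein. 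Passing to $\widehat R$ does not affect $\ehk$ or the hypotheses, so we may assume $R$ is complete, hence formally unmixed. The case $d=1$ is immediate since $\ehk(R)=\e(R)=3\ge 13/8$, so assume $d\ge 2$.

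\medskip

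The core of the plan is to apply Theorem~\ref{volehk} with a minimal reduction $J$, taking $r:=\mu_R(\m/J^*)\le\mu_R(\m/J)=2$, and to split cases on $r$. When $r=0$ (i.e., $\m\subseteq J^*$), letting $s\to d$ in $\ehk(R)\ge \e(R)\,v_s$ yields $\ehk(R)\ge 3$. When $r=1$ (so $J\subsetneq J^*\subsetneq \m$, in particular $R$ is not F-rational), one has $\ehk(R)\ge 3(v_s-v_{s-1})$ and I would optimize $s$ near $\lceil d/2\rceil$ using the P\'olya formula for $v_s$. When $r=2$ (so $J=J^*$, i.e., $R$ is F-rational by the parameter-ideal criterion in the CM case), one applies $\ehk(R)\ge 3(v_s-2v_{s-1})$; in $d=2$ the maximum occurs at $s=4/3$ with value $2/3$, giving $\ehk(R)\ge 2$, and in $d=3$ the critical point $s=(5+\sqrt 5)/4$ gives $\ehk(R)\ge 5(3+\sqrt 5)/16>13/8$.

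\medskip

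The main obstacle is the $r=2$ case in higher dimension. Because the sum of $d$ i.i.d.\ uniforms concentrates near $d/2$ with spread of order $\sqrt d$, the quantity $\max_s(v_s-2v_{s-1})$ is dominated by $v_{d/2}=1/2$ and in fact decays as $d$ grows, so the crude volume bound alone cannot reach $13/8$ when $d\ge 4$ and $R$ is F-rational. To close this gap I would use the refined estimate Theorem~\ref{volehk2}. Minimal multiplicity gives $\m^n=J^{n-1}\m\subseteq J^{n-1}$, hence $f_J(z)\ge 1$ for every $z\in\m$; the hard step is to exhibit minimal generators $z_1,z_2$ of $\m$ modulo $J^*$ with $f_J(z_i)$ strictly larger than $1$ in a way that can be read off from the relations of $R$ (using $u^2,uv,v^2\in J\m$ where $u,v$ complete a minimal generating set), so that the term $\sum_i v_{s-t_i}$ in~(\ref{volehk2formula}) is strictly smaller than $r\,v_{s-1}$. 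Combining this refined volume inequality with the minimum $3/2$ from Theorem~\ref{duality}(iii) or (iv) is what I expect to push the bound up to $13/8$ in every dimension; establishing those bounds on $f_J(z_i)$ in the F-rational, non-Gorenstein setting is the hardest single step of the argument.
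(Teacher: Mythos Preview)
Your structural reduction to minimal multiplicity and type $2$ is correct, but the strategy via volume estimates cannot be completed. The decisive obstruction is that $13/8$ is \emph{attained} by the $4$-dimensional generic determinantal ring $A=K[[y_{ij}]]_{1\le i\le 3,\,1\le j\le 2}/I_2(y_{ij})$, which satisfies all the hypotheses of the theorem. This ring is F-rational, so $J^*=J$ and $r=2$; moreover every minimal generator $z$ of $\m_A$ modulo $J$ has $f_J(z)=1$, since $\overline J=\m_A$ gives $f_J=f_{\m_A}$ by Rees, and in a standard graded domain a degree-one element has $z^n\notin\m_A^{n+1}$. Hence Theorem~\ref{volehk2} gives nothing beyond Theorem~\ref{volehk} for $A$, and one checks directly that $\max_s 3(v_s-2v_{s-1})<1.4$ in dimension $4$. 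As Theorem~\ref{duality} contributes only $3/2$ here, no combination of these inequalities reaches $13/8$ for $A$ itself, so they cannot prove the theorem in general; the ``hardest single step'' you isolate is in fact impossible. (The $r=1$ branch has the same defect in large dimension, since $\max_s 3(v_s-v_{s-1})$ is of order $d^{-1/2}$ and the auxiliary bounds from Theorem~\ref{duality} again stop at $3/2$.)

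The paper's proof is entirely different and avoids volume estimates. After completing, write $R=S/I$ with $S$ regular of dimension $d+2$; by Hilbert--Burch, $I=I_2(a_{ij})$ for some $3\times 2$ matrix with entries $a_{ij}\in\n_S$. Set $R_1=K[[y_{ij},x_1,\dots,x_{d+2}]]/I_2(y_{ij})$, of dimension $d+6$. Then $R\cong R_1/(y_{ij}-a_{ij})$, and the dimension drop shows the six elements $y_{ij}-a_{ij}$ form a regular sequence on $R_1$; hence $\ehk(R)\ge\ehk(R_1)=\ehk(A)=13/8$, the last equality by the Eto--Yoshida computation for the Segre product $S_{2,3}$. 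The deformation to the generic determinantal ring is the missing idea in your proposal.
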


\begin{proof}
We may immediately complete.
Let $d = \dim R$ and $k = \embdim(R) - \Dim(R)$. It is known that $k \leq \e-1 =2$.  
Since $R$ is not a complete intersection then $k >1$, so $R$ is 
a ring of minimal multiplicity.  Sally's Thereom 1.1 in~\cite{Sa}, gives that we can write $R = S/I$  where
$S = K[[x_1,\ldots, x_{d+2}]]$. The same result implies that $I$ is a 3-generated ideal of $R$ and
that  the Hilbert-Burch theorem applies, so
$I$ is the ideal of minors of a $3 \times 2$ matrix, say
$\bmatrix a_{ij}\endbmatrix$, where
$a_{ij} \in (x_1,\ldots, x_{d+2})S$.  

Consider the ring 
$R_1 = K[[y_{11},\ldots, y_{32},x_1,\ldots, x_{d+2}]]/I_2([y_{ij}])$.
Then $\dim R_1 = 4 + d + 2 = d+6$.  

Clearly, $R_1/(y_{ij}-a_{ij} |
1\le i\le 3, 1 \le j\le 2) \cong R$.  Since $\dim R_1 - \dim R = 6$,
the equations form a regular sequence, so $\e_{HK}(R) \ge \e_{HK}(R_1)$,
and $$\e_{HK}(R_1) = \e_{HK}\big(K[[y_{11},\ldots, y_{32}]]/I_2([y_{ij}])\big)
= 13/8,$$ (the ring $R_1$ is isomorphic to the Segree product $S_{2,3}$ and so Theorem 3.3 in~\cite{EY} gives the value $\frac{13}{8}$).
\end{proof}

\subsection*{Case of a local ring of Hilbert-Samuel multiplicity $3$:}

\medskip
\noindent
Let $(R, \fm)$ be a formally unmixed local ring of multiplicity $\e =3$ and characteristic $p >2$. 
We can complete and assume that $R$ is complete and unmixed.

If $\ehk(R) < \frac{\e}{\e-1} = 1.5$ we have that $R$ is Gorenstein, by Theorem~\ref{duality} (iii). In this case, by 
Theorem~\ref{mult=3} if $R$ is not a complete
intersection then $\ehk(R) \geq \frac{13}{8}$. Otherwise $\ehk(R) \geq \ehk(R_{p,d})$ by Enescu-Shimomoto. This shows that the
Watanabe-Yoshida conjecture is settled for local rings of multiplicity $3$.

\subsection*{Case of a local ring of Hilbert-Samuel multiplicity $4$:}

\medskip
\noindent
Let $(R, \fm)$ be a formally unmixed local ring of multiplicity $\e =4$ and characteristic $p >2$. 
We can complete and assume that $R$ is complete and unmixed. Let $k =\embdim(R) -\dim(R)$. 

If $\e_{HK}(R) < 1 + 1/(4-1) = 4/3$, then $R$ is Gorenstein by Theorem~\ref{duality} (iii).  Since $k \leq \e-1 =3$, then if
$R$ has minimal multiplicity ($k=3$), then $\e_{HK}(R) \ge 4/2=2$ by Theorem~\ref{duality}(iv). If  $k=2$, by considering the minimal free resolution of $R$ over $S$, we see that $R$ is a complete intersection. The case $k=1$ also leads to $R$ being a complete intersection.
In both cases $\ehk(R) \geq \ehk(R_{p,d})$ by Enescu-Shimomoto. This shows that the Watanabe-Yoshida conjecture is settled for local rings of multiplicity $4$.

\subsection*{Case of a local ring of Hilbert-Samuel multiplicity $5$:}

\medskip
\noindent
Let $(R, \fm)$ be a formally unmixed local ring of multiplicity $\e =5$ and characteristic $p >2$. 
We can complete and assume that $R$ is complete and unmixed. Let $d = \Dim(R)$.

We can assume that $R$ is Gorenstein if $\e_{HK} < 1.25$
by Theorem~\ref{duality} (iii).

Let us assume that $R$ is Gorenstein and set $k =\embdim(R) -
\Dim(R)$. If $ k = \e-1$ then $R$ has minimal multiplicity and then
Theorem~\ref{duality} (iv) gives $\ehk(R) \geq \e / 2 = 2.5$. So we
can assume that $k \leq \e-2 =3$. In fact, the cases $k =1,2$ both
imply that $R$ is complete intersection (the case $k=2$ follows from
Serre Theorem as in~\cite{Sa3} Theorem 1.2 page 69).

If $k=3$ then write $R$ as $S/I$ where $S = K[[x_1,\dots,
x_{d+3}]]$ is complete local regular and $I$ is a height 3 Gorenstein
ideal with $I \subset \n^2$ where $\n = (x_1, \ldots, x_{d+3})$. By the
Buchsbaum-Eisenbud Structure Theorem (see Theorem 1.5 page 72
in~\cite{Sa3}) the ideal $I$ is given
by the set of Pfaffians of a $5\times 5$ anti-symmetric
matrix with entries in $S$. The upper right corner  has at most 10 non-zero
entries denoted  $a_{ij}, 1 \leq i < j \leq 5$. These elements belong
to $\n$.

Let $A = (y_{ij})$ be an antisymmetric matrix of indeterminates of size $5 \times 5$ and
set

$$R_1 = K[[y_{ij}, x_1, \ldots, x_{d+3}: 1 \leq i < j \leq 5]]
/((Pf(A)),$$
where $(Pf(A))$ is the ideal generated by the Pfaffians of $A$.

We note that $\dim(R_1) = 7 + d +3 = 10 +d.$ Also, the elements $y_{ij} - a_{ij}, 1
\leq i < j \leq 5$ form a regular sequence in $R_1$ since 
$R_1/( y_{ij} - a_{ij}, 1
\leq i < j \leq 5) \simeq R,$ 
and the dimension drops exactly by $10$.

Therefore

$$\ehk(R) \geq \ehk(R_1) = \ehk (K[[y_{ij}: 1 \leq i < j \leq 5]]
/((Pf(A)),$$ 
and the former is a Gorenstein ring of dimension $7$ and multiplicity
$5$.

So, it remains to examine $7$-dimensional Gorenstein rings of
multiplicity $5$.

Let $J$ be an ideal generated by an s.o.p.. Since $\mu (\fm) = d+3$ and $d=\dim(R)$, we get that $3 \geq \mu(\fm/J) \geq \mu (\fm/ J^*)$. 

Using the notations from Theorem~\ref{volehk2}, we note that $\e (v_s - \mu(\fm/J^*)v_{s-1}) \geq \e(v_s - 3v_{s-1})$.

Now apply Theorem~\ref{volehk2} with $\e=5$ and $s = 3.32$ and  get $\e_{HK}(R) \ge 1.112$ (we used Mathematica to compute the
volume functions).

\section{Watanabe-Yoshida Conjecture for rings of dimension $5$ and $6$}

In this section we will show how to use Theorem~\ref{volehk2} to prove the Watanabe-Yoshida conjecture in 
dimensions $5$ and $6$ for large enough $p$. 

We note that $$m_5 = \frac{17}{15}, \quad m_6 = \frac{781}{720} = 1.0847.$$ We need results of Goto and Nakamura~\cite{GN}, 
Theorems 1.1 and 1.2.

\begin{Thm}
\label{gn}
Let $(R, \m, K)$ be a homomorphic image of a Cohen-Macaulay ring. Assume that $R$ is equidimensional.

Then for every parameter ideal $I$ in $R$ we have 

$$ \e(I) \geq \length(R/ I^*).$$
In fact, under the assumption that $R$ is a homomorphic image of a Cohen-Macaulay ring and 
${\rm Ass}(R) = {\rm Assh}(R),$ if

$$ \e(I) = \length(R/ I^*),$$ for some parameter ideal $I$, then $R$ is Cohen-Macaulay and $F$-rational.

\end{Thm}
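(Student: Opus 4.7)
The plan is to prove both parts using the colon-capturing property of tight closure (Hochster--Huneke), which holds precisely in the setting of an equidimensional homomorphic image of a Cohen--Macaulay local ring. After completing $R$ (which preserves all the relevant hypotheses, $\e(I)$, $\length(R/I^*)$, and guarantees the existence of a test element) I may assume $R$ is complete. Colon-capturing then gives, for any partial system of parameters $x_1,\ldots,x_{i+1}$ of $R$,
\[
(x_1,\ldots,x_i) :_R x_{i+1} \;\subseteq\; (x_1,\ldots,x_i)^{*}.
\]

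For the inequality, I would write $R = T/\fa$ with $T$ a complete local Cohen--Macaulay ring of dimension $n$ and $\fa$ equidimensional of height $n-d$, lift $I = (x_1,\ldots,x_d)$ to $(y_1,\ldots,y_d) \subseteq T$, and extend by $w_1,\ldots,w_{n-d} \in \fa$ to a full system of parameters of $T$. Cohen--Macaulayness of $T$ gives $\e_T(y,w;T) = \length_T(T/(y,w))$, and the associativity formula for multiplicities together with the equidimensionality of $\fa$ relates this quantity to $\e_R(I)$. In parallel, I would use colon-capturing to analyze the tight-closure filtration $J_i := (x_1,\ldots,x_i)^{*}$: an iterated application, supported by a uniform test element, shows that each $x_{i+1}$ acts as a nonzerodivisor on $R/J_i$, producing short exact sequences
\[
0 \longrightarrow R/J_i \xrightarrow{x_{i+1}} R/J_i \longrightarrow R/(J_i + x_{i+1}R) \longrightarrow 0
\]
that telescope $\length(R/I^{*})$ against the Cohen--Macaulay length in $T$, yielding $\length(R/I^{*}) \leq \e(I)$.

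For the equality case, suppose $\e(I) = \length(R/I^{*})$ for some parameter ideal. Serre's bound $\e(I) \leq \length(R/I)$, with equality iff $R$ is Cohen--Macaulay, combines with the sharper hypothesis $\Ass(R)=\Assh(R)$ (which rules out embedded primes) to force, by tracking equality backward through the filtration argument above, each colon-capturing inclusion to be an equality, and each Koszul homology $H_j(x_1,\ldots,x_d;R)$ to vanish for $j \geq 1$; hence $R$ is Cohen--Macaulay. In a Cohen--Macaulay ring, $\length(R/I^{*}) = \e(I) = \length(R/I)$ forces $I = I^{*}$, and by the standard criterion that a single tightly closed parameter ideal in an unmixed Cohen--Macaulay ring implies F-rationality, $R$ is F-rational.

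The main obstacle I expect is the iterative colon-capturing step: making the length comparison with the Cohen--Macaulay lift precise requires a uniform test element across all Frobenius powers, and the sharpness of this estimate is exactly what lets the equality case deliver both $I = I^{*}$ and Cohen--Macaulayness simultaneously. The hypothesis $\Ass(R)=\Assh(R)$ (as opposed to mere equidimensionality) should enter precisely at the point of ruling out embedded primes that would otherwise obstruct the vanishing of the Koszul homology.
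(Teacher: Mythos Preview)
The paper does not prove this statement. It is quoted as Theorems~1.1 and~1.2 of Goto--Nakamura~\cite{GN} and then invoked without further argument in the proof of Theorem~\ref{dim5}; there is no proof in the paper for your proposal to be compared against.

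On the merits of your sketch: colon-capturing is indeed the right tool, and it is what Goto and Nakamura use. The consequence you draw, that $x_{i+1}$ is a nonzerodivisor on $R/(x_1,\ldots,x_i)^{*}$, is correct and central. Two steps, however, do not work as you have written them. First, the short exact sequences
\[
0 \longrightarrow R/J_i \xrightarrow{\,x_{i+1}\,} R/J_i \longrightarrow R/(J_i + x_{i+1}R) \longrightarrow 0
\]
involve modules of positive Krull dimension for $i<d$ and so cannot be ``telescoped'' into a length inequality; moreover the cokernel is $R/(J_i + x_{i+1}R)$, which only surjects onto $R/J_{i+1}$. The argument that actually works filters $R/(x_1^{n},\ldots,x_d^{n})$ by monomials in the $x_i$, uses the colon identities to bound each successive quotient below by $\length(R/\fq^{*})$, and then passes to the limit $\e(\fq)=\lim_n n^{-d}\length\bigl(R/(x_1^{n},\ldots,x_d^{n})\bigr)$. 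Second, the lift to a Cohen--Macaulay ring $T$ is not needed and, as you have set it up, points the wrong way: from $0\to \fa/(w)\to T/(w)\to R\to 0$ and additivity of multiplicity one gets only $\e_R(I)\le \length_T\bigl(T/(y,w)\bigr)$, and this can be strict even when $R$ is equidimensional (for instance $T=K[[a,b,c]]$, $\fa=(a^2,ab)$, $I=(b,c)$, $w=a^2$ gives $\e_R(I)=1<2=\length_T(T/(b,c,a^2))$). Goto--Nakamura work directly in $R$. Your outline of the equality case is closer to what is needed, but ``tracking equality backward through the filtration'' only acquires content once the filtration argument for the inequality is set up correctly.
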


We can prove the following

\begin{Thm}
\label{dim5}

Let $(R, \m, K)$ be a formally unmixed local nonregular ring of dimension $d$ and positive prime characteristic $p>2$. Then

(i) If $d=5$, then $$\ehk (R) \geq \ehk(R_{p,d}) \geq \frac{17}{15} =1 + m_5$$

(ii) If $d=6$, then $$\ehk(R) \geq \ehk(R_{p,d}) \geq \frac{781}{720} = 1+m_6.$$

\end{Thm}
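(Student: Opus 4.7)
I suppose for contradiction that $\ehk(R) < \ehk(R_{p,d})$. First complete and assume $R$ is unmixed; the Hilbert--Kunz multiplicity and the hypotheses are preserved. If $R$ is a complete intersection, the Enescu--Shimomoto inequality cited in the introduction gives $\ehk(R) \geq \ehk(R_{p,d})$ at once, so henceforth assume $R$ is not a complete intersection. Yoshida's explicit formulas show that $\ehk(R_{p,d})$ is monotone decreasing in $p$, so it suffices to beat the value at $p = 3$: namely $\ehk(R_{3,5}) = 33/29 \approx 1.138$ when $d = 5$ and $\ehk(R_{3,6}) = 69480/63720 \approx 1.090$ when $d = 6$.

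The main engine is Theorem~\ref{volehk2} applied with $I = \m$ and $J$ a minimal reduction of $\m$. I split the argument into two regimes according to $\e = \e(R)$. When $\e/(\e-1) > \ehk(R_{3,d})$, which is $\e \leq 8$ for $d = 5$ and $\e \leq 12$ for $d = 6$, Theorem~\ref{duality}(iii) either yields $\ehk(R) \geq \e/(\e-1)$ (already contradicting the assumption) or forces $R$ to be Gorenstein and F-regular. In the latter case $J^* = J$ by F-regularity, so $\mu(\m/J^*) = k := \embdim(R) - d$; since $R$ is Gorenstein but not a complete intersection, Serre's theorem gives $k \geq 3$, and Abhyankar's inequality gives $k \leq \e - 1$ with equality exactly at minimal multiplicity, which is dispatched by Theorem~\ref{duality}(iv) via $\ehk(R) \geq \e/2$. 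For each of the finitely many remaining pairs $(\e, k)$ with $3 \leq k \leq \e - 2$, Theorem~\ref{volehk2} gives $\ehk(R) \geq \e(v_s - k v_{s-1})$, and a Mathematica-assisted optimization over $s \in [1,3]$ verifies the bound clears the target.

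For the complementary range of larger $\e$ (namely $\e \geq 9$ for $d = 5$ and $\e \geq 13$ for $d = 6$), Theorem~\ref{duality}(iii) no longer yields Gorenstein structure, so I appeal to Theorem~\ref{gn} (Goto--Nakamura): a complete local equidimensional ring is a homomorphic image of a regular ring, whence $\length(R/J^*) \leq \e(J) = \e$ and consequently $\mu(\m/J^*) \leq \length(R/J^*) - 1 \leq \e - 1$. Theorem~\ref{volehk2} then yields $\ehk(R) \geq \e(v_s - (\e - 1) v_{s-1})$; a direct computation shows the optimized right-hand side grows like $\e/d!$ as $\e \to \infty$, so only finitely many values of $\e$ need be checked explicitly. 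The main obstacle is the dimension-six part, where the target $\approx 1.085$ leaves very little room and the optimum $s$ shifts across the piecewise polynomial regions of $v_s$; the borderline cases near the transition between the two regimes must be verified with care. The referee's sharpening consists of observing that since the numerical bounds already exceed $\ehk(R_{3,d})$, they exceed $\ehk(R_{p,d})$ for every $p > 2$, upgrading the weaker conclusion $\ehk(R) \geq 1 + m_d$ to the stronger $\ehk(R) \geq \ehk(R_{p,d})$.
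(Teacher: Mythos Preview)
Your approach is correct and reaches the same conclusion, but the organization differs from the paper's in one substantive way. The paper does \emph{not} split by the size of $\e$. Instead it proves a single dichotomy valid for every $\e$: either $R$ has minimal multiplicity, or $\mu(\m/J^*) \le \e-2$. The argument is short: if $R$ is not F-rational, Goto--Nakamura gives the \emph{strict} inequality $\e(J) > \length(R/J^*)$, whence $\mu(\m/J^*) \le \length(\m/J^*) = \length(R/J^*)-1 \le \e-2$; if $R$ is F-rational (hence Cohen--Macaulay with $J^*=J$), then $\mu(\m/J)=\e-1$ forces $\length(\m/J)=\mu(\m/J)$, so $\m^2\subseteq J$ and $R$ has minimal multiplicity. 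Minimal multiplicity is then dispatched as you do. This gives the uniform bound $r\le \e-2$ to feed into Theorem~\ref{volehk2}, and the numerics are handled in one pass per dimension.

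Your route reaches $r\le \e-2$ only in the small-$\e$ regime (via the Gorenstein F-regular reduction), and falls back to the cruder $r\le \e-1$ for large $\e$. That still works numerically---I checked the tight spots, e.g.\ $\e=13,14$ in dimension $6$ clear the $p=3$ target at $s\approx 2.25$--$2.3$---but the margins are thinner and the optimal $s$ must be re-tuned. What the paper's dichotomy buys is exactly that extra unit in $r$, which widens the margin enough to make the tables straightforward. What your approach buys is modularity: you invoke Theorem~\ref{duality}(iii) and Goto--Nakamura as black boxes without needing the F-rational/non-F-rational case analysis. Your explicit reduction to the $p=3$ value via monotonicity of $\ehk(R_{p,d})$ in $p$ (verifiable from Yoshida's rational-function formulas) is the same step the paper attributes to the referee.
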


\begin{proof}
We can complete $R$ and enlarge the residue field of $R$ so that it is infinite. The associativity formula for the 
Hilbert-Kunz multiplicity shows that for an unmixed ring $R$
$\ehk(R) < 2$ implies that $R$ is domain (as in Remark 2.6 in~\cite{AE}). Therefore, we can assume that $R$ is domain.

Let $\underline{x}$ be a minimal reduction for $\m$. Set $J = (\underline{x})$. Note that we are in the case of $R$ complete and domain.
Set $\e= \e(R)$.

We claim that either $R$ has minimal multiplicity or $ \mu(m/J^*) \leq \e -2$.

If $R$ is not $F$-rational then $\e(J)  > \length(R/ J^*)$. So, $\e =\e(J)  >  1+ \length(\m/J^*) \geq
1+ \mu (\m/J^*)$. In other words, $\e-1 > \mu (\m/J^*)$ or $ \e-2 \geq \mu( \m/J^*)$.

Now let us assume that $R$ is Cohen-Macaulay and F-rational. Then $\e =\e(J) =\length(R/ J) = \length (R/J^*)$. In conclusion,
$\length(\m/ J^*) = \e -1$. Since $\mu ( \m/ J^*) \leq \length(\m/J^*) \leq \e-1$, we see that $ \mu(m/J^*) > \e -2$ is only possible when
$\mu (\m/ J^* ) = \length (\m/ J^*)$. Recall that $J^* =J$. So we get $\mu (\m/ J ) = \length (\m/ J)$. But,
$\mu (\m/ J)  = \dim (\m /\m^2 +J) = \length (\m /\m^2 +J)$.  Hence $\mu (\m/ J ) = \length (\m/ J)$ 
leads to $\m^2 \subseteq J$. But it is well-known that $\m^2 \subseteq J$ implies $\m^2 =\m J$. This proves that $R$ is of minimal multiplicity by Theorem 3.8 page 45 in ~\cite{Sa3}.

Our claim is now proved. In the minimal multiplicity case the Theorem~\ref{duality} (iii) implies that 
$\ehk(R) \geq 1.5 \geq \ehk{(R_{p,d})}$, by Remark~\ref{weak}, or $\e =2$ in which case $R$ is a 
hypersurface and then $\ehk (R) \geq \ehk(R_{p, d})$ by Enescu-Shimomoto. 

Hence, in the minimal multiplicity case, the Watanabe-Yoshida conjecture is true.

So we have reduced our analysis to the case $ \mu(m/J^*) \leq \e -2$. Let $r = \mu(m/J^*).$

Theorem~\ref{volehk2} implies that

$$ \ehk(R) \geq \e \cdot (v_s -rv_{s-1}) \geq \e \cdot (v_s -(\e-2)v_{s-1}).$$

In fact if $\e \geq \e_0$  and $r_0 \geq \e-2$ then also

\begin{equation}
\label{ineq}
\ehk(R) \geq \e_0 \cdot (v_s -r_0v_{s-1})
\end{equation}

Let us consider the case $d=5$.

Let $\e =\e(R).$ If $\e \geq 137$, then $\ehk (R) \geq \e(R) / d!$ implies that $\ehk (R) \geq 137/5! = 137/120 = 1.141(6)$.

Let us assume now that $\e \leq 136$. We will apply inequality (3.1) repeatedly by giving values to $\e_0, r_0$, and $s$.

In the table below we list these choices together with the corresponding lower bound obtained for $\ehk(R)$.

\medskip

\begin{center}
\begin{tabular}{ c  |  c  |  c  | c  ||  c }
$\e$ & $\e_0$ & $r_0$ & s & $\ehk$ \\ \hline
$35 \leq \e \leq  136$ & 35 &134 & 1.4& $\geq 1.153$ \\
$18 \leq \e \leq 34$ & 18 & 32 & 1.7 & $\geq 1.197$ \\
$11 \leq \e \leq 17$ & 11 & 15 & 1.9 & $\geq 1.187$ \\
$7 \leq \e \leq 10$ & 7 &  8& 2.1 & $\geq 1.161$ \\
$5 \leq \e \leq 6$ & 5 & 4 & 2.4 & $\geq 1.313$ \\
\end{tabular}
\end{center}

Now, let us move to the case $d=6$.

Again, we may assume that $\e \geq 5$. For $\e \geq 786$ we obtain $\ehk \geq 786/6! = 786/720$.

We will now show that $$ G(e) : = \e(v_s-(\e-2)v_{s-1}) \geq \frac{786}{720}$$ for all $5 \leq \e \leq 785$.

Since $G(e) = -v_{s-1}\e^2+(v_2+ 2 v_{s-1})e$ is a quadratic function in $\e$, we conclude that for a fixed $s$, the maximum value of $G$ is attained
at $\e = m : = \frac{v_s+2v_{s-1}}{2v_{s-1}}$.

This implies that for $a \leq m \leq b$ 

\begin{equation}
\label{ineq2}
G(e) \leq \min(G(a), G(b))
\end{equation}

The formula for $v_s$ gives the following: $v_s = \frac{s^6}{6!}$, for $0\leq s < 1$; $v_s = \frac{s^6}{6!} - \frac{(s-1)^6}{5!}$, for $1 \leq s <2$ and $v_s = \frac{s^6}{6!} - \frac{(s-1)^6}{5!} -\frac{(s-2)^6}{2 \cdot 4!}$, for $2 \leq s <3$.

For $1 \leq s < 2$, we obtain $m = \dfrac{s^6-4(s-1)^6}{2 (s-1)^6}$. For $2 \leq s < 3$, we obtain $m =  \dfrac{s^6-4(s-1)^6 +3 (s-2)^6}{2 (s-1)^6 - 6(s-2)^6}$.

If $296 \leq \e \leq 786$, then by letting $s=1.3$ we obtain $m \geq 3308.57 > 786$. This gives that $G$ is increasing on $[286, 786]$ which shows that on this interval $G(e) \geq G(296) > 1.89$ and so 
$\ehk \geq 1.89$.

For the rest of the analysis, as in the paragraph above, we will consider intervals $[a,b]$ containing $\e$, give a specific value to $s$ and then compute the resulting value for $m$. In each case, $m$ will happen to land in $[a,b]$ and hence inequality~\ref{ineq2} will apply.

The numbers including those for specific values for $G$ are computed using Mathematica and we usually present our numbers while keeping the first decimal point only.

\medskip

\begin{center}
\begin{tabular}{c | c | c | c | c}
$[a, b]$ & $s$ & $m$ & $\min(G(a), G(b))$ & $\ehk \geq $ \\ \hline
$[59, 296]$ & 1.6 & 177.7 & $G(59)$  & 1.133 \\
$[26, 58] $ & 1.9 & 42.2 & $G(26)$ & 1.123 \\
$16, 25]$ & 2.1 & 22.2 & $G(16)$ & 1.118 \\
$[10, 25]$ & 2.2 & 13.3& $G(10)$ & 1.118\\
$[5, 9]$ & 2.6 & 7.3 & $G(5)$ & 1.107 \\

\end{tabular}
\end{center}

\end{proof}

\section{Root extensions and comparison of Hilbert-Kunz multiplicities}
\label{root-compare}

The next theorem we prove allows us to use Theorem~\ref{volehk2} to obtain
lower bounds for Hilbert-Kunz multiplicities that are not available
using Theorem~\ref{volehk}.

We will need to use a result of Watanabe and Yoshida (\cite{WY}, Theorem
2.7). Let $ff(A)$ denote the total ring of fractions of a ring $A$.

\begin{Thm}\label{extendehk}
Let $(R,\m) \into (S,\n)$ be a module-finite extension of local domains.
Then for every $\m$-primary ideal $I$ of $R$, we have
\begin{equation}
e_{HK}(I) = \dfrac{e_{HK}(IS)}{[ff(S):ff(R)]} \cdot [S/\n:R/\m].
\end{equation}
\end{Thm}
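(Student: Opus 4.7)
The plan is to compute $\lim_{q \to \infty} \lambda_R(S/I^{[q]}S)/q^d$ in two different ways (where $d = \dim R = \dim S$, these being equal because $R \hookrightarrow S$ is module-finite) and compare. The two viewpoints are: treat $S/I^{[q]}S$ as an $R$-module, versus treat it as an $S$-module.

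First I would establish the change-of-rings length identity: if $M$ is any $S$-module of finite length as an $R$-module, then since $S$ is local with maximal ideal $\mathfrak{n}$, every simple $S$-subquotient in a composition series for $M$ is isomorphic to $S/\mathfrak{n}$, and restricting to $R$ each such factor has $R$-length $[S/\mathfrak{n}:R/\mathfrak{m}]$. Composition gives
\begin{equation*}
\lambda_R(M) \;=\; [S/\mathfrak{n}:R/\mathfrak{m}] \cdot \lambda_S(M).
\end{equation*}
Applying this to $M = S/I^{[q]}S$, noting that $I^{[q]}S = (IS)^{[q]}$, dividing by $q^d$ and letting $q \to \infty$ gives
\begin{equation*}
\lim_{q \to \infty} \frac{\lambda_R(S/I^{[q]}S)}{q^d} \;=\; [S/\mathfrak{n}:R/\mathfrak{m}] \cdot e_{HK}(IS).
\end{equation*}

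Next I would compute the same limit viewing $S$ as a finitely generated torsion-free $R$-module of rank $r := [ff(S):ff(R)]$ (which is the rank because both rings are domains, so $S \otimes_R ff(R) \cong ff(S)$ is an $r$-dimensional $ff(R)$-vector space). The main input is the associativity formula for Hilbert--Kunz multiplicity of modules: for a finitely generated $R$-module $M$ of dimension $d$,
\begin{equation*}
\lim_{q \to \infty} \frac{\lambda_R(M/I^{[q]}M)}{q^d} \;=\; \sum_{P} \lambda_{R_P}(M_P)\, e_{HK}(I, R/P),
\end{equation*}
the sum running over minimal primes $P$ of $R$ with $\dim(R/P) = d$. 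For $M = S$ with $R$ a domain, only $P = 0$ contributes and $\lambda_{R_{(0)}}(S_{(0)}) = r$, so the limit equals $r \cdot e_{HK}(I)$. Equivalently, one can bypass the associativity formula by picking a short exact sequence $0 \to R^{r} \to S \to T \to 0$ of $R$-modules with $\dim T < d$ (possible because $S/R^r$ is torsion, hence of smaller dimension) and using that $\lambda_R(T/I^{[q]}T) = O(q^{d-1})$ together with additivity of the Hilbert--Kunz limit on such sequences.

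Equating the two expressions yields
\begin{equation*}
[ff(S):ff(R)] \cdot e_{HK}(I) \;=\; [S/\mathfrak{n}:R/\mathfrak{m}] \cdot e_{HK}(IS),
\end{equation*}
which rearranges to the stated identity. The only nontrivial point is the additivity / associativity of the Hilbert--Kunz limit over the short exact sequence, i.e.\ verifying that $\lambda_R(T/I^{[q]}T) = O(q^{d-1})$ when $\dim T < d$; this is a standard fact (it reduces, by filtering $T$ by prime cyclic quotients, to the case $T = R/P$ with $\dim R/P < d$, where the bound follows from the very definition of $e_{HK}$ applied in the lower dimension). Once that is in hand, the rest is essentially bookkeeping.
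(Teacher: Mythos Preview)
Your argument is correct. The two computations of $\lim_{q\to\infty}\lambda_R(S/I^{[q]}S)/q^d$ --- once via the change-of-rings length formula and once via the rank of $S$ as an $R$-module using additivity of the Hilbert--Kunz limit --- are exactly the standard way to establish this identity, and the details you sketch (including the handling of the torsion quotient $T$) are sound.

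As for comparison with the paper: the paper does not prove this theorem at all. It is quoted as a result of Watanabe and Yoshida (\cite{WY}, Theorem~2.7) and used as a black box in Section~\ref{root-compare}. So there is no ``paper's own proof'' to compare against; your proposal is in fact the usual proof and is essentially what one finds in the cited source.
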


\begin{Def}
Let $(R,\m)$ be a domain. Let $z \in \m$ and $n$ a positive integer. Let $v \in R^+$ be any 
root of $f(X)=X^n-z$.  We call $S = R[v]$ a {\it radical extension} for the pair $R$, $z$.
\end{Def}

It should be remarked that whenever $S$ is radical for $R, z$, then $b:=[ff(S):ff(R)] \leq n$. In what follows $\n$ will denote the maximal ideal of $S$.

\begin{Lma}
\label{bound}
Let $(R, \m, K)$ be a domain and $(S=R[v], \n)$ a radical extension
for $R$ and $z\in R$. Assume that $K$ is algebraically closed. 
Let $I \inc R$ be such that $z\notin I$ and $\m = (z)+I$.  Suppose that
$J = (zr)+I_0 \inc R$ is an ideal such that $\length_R(J/I_0) = 1$ and
in $S$, $vrIS \inc I_0 S$ (one such possibility is $J =\m = (z)+I$).
 Let $b = [ff(S) : ff(R)]$.

Then $$\e_{HK}(I_0, J) \leq \frac{n}{n-1} \ehk(R) - \frac{n}{b(n-1)}\ehk(S).$$

\end{Lma}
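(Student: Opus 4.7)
The plan is to reformulate the claim as a length-level inequality and prove it via a $v$-adic filtration of $\n^{[q]}/\m^{[q]}S$. First, Theorem~\ref{extendehk} applied with $I=\m$ (together with $[S/\n:R/\m]=1$, which holds because $K$ is algebraically closed) gives $\ehk_S(\m S) = b\cdot\ehk(R)$. Since $\length_R(\cdot) = \length_S(\cdot)$ for modules supported at the maximal ideal, the short exact sequence $0\to\n^{[q]}/\m^{[q]}S\to S/\m^{[q]}S\to S/\n^{[q]}\to 0$ yields
$$\lim_{q\to\infty}\frac{\length(\n^{[q]}/\m^{[q]}S)}{q^d} = b\cdot\ehk(R) - \ehk(S).$$
Hence the conclusion of the lemma is equivalent, at leading order in $q$, to the length estimate
$$b(n-1)\,\length(J^{[q]}/I_0^{[q]}) \le n\,\length(\n^{[q]}/\m^{[q]}S) + O(q^{d-1}),$$
and proving this estimate is the goal.

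For this, using $v^n=z$ and $\m^{[q]}S=(v^{nq})+I^{[q]}S$, I will use the chain
$$\m^{[q]}S = (v^{nq})+I^{[q]}S \subseteq (v^{(n-1)q})+I^{[q]}S \subseteq\cdots\subseteq (v^q)+I^{[q]}S = \n^{[q]},$$
whose $n-1$ successive quotients $G_i/G_{i+1}$ (with $G_i := (v^{iq})+I^{[q]}S$ for $i=1,\dots,n-1$) are each cyclic $S$-modules generated by the class of $v^{iq}$ and are quotients of $S/\n^{[q]}$. The hypothesis $vrIS\subseteq I_0 S$, raised to the $q$-th power, becomes $(vr)^q I^{[q]}\subseteq I_0^{[q]}S$. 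Together with $\m\cdot zr\subseteq I_0$ (from $\length(J/I_0)=1$) and the factorization $(zr)^q = v^{(n-i)q}\cdot v^{iq}r^q$ in $S$, this will let me exhibit, for each $i$, a cyclic $R$-submodule of $G_i/G_{i+1}$ generated by the class of $v^{iq}r^q$ whose length compares favorably to $\length(J^{[q]}/I_0^{[q]})$; the precise comparison uses the $(vr)^q$-annihilator relation to transfer annihilators between $(zr)^q$ and $v^{iq}r^q$. A rank-$b$ calculation for $S$ as an $R$-module then shows that each such submodule contributes at least $(b/n)\length(J^{[q]}/I_0^{[q]}) + O(q^{d-1})$ to $\length(G_i/G_{i+1})$, and summing over $i=1,\dots,n-1$ gives the desired estimate.

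The main obstacle will be the error analysis in constructing these maps. Specifically, $v$ may fail to be a non-zero-divisor on $S/I^{[q]}S$, the equality $I_0^{[q]}S \cap R = I_0^{[q]}$ may fail, and $S$ may not be free as an $R$-module even though it has rank $b$. Each of these discrepancies contributes only $O(q^{d-1})$ to lengths, as in the proof of Theorem~\ref{volehk2} via Theorem 8.17(a) of \cite{HH1} (which bounds lengths of tight-closure-type quotient modules by $O(q^{d-1})$), so they drop out upon dividing by $q^d$ and taking the limit.
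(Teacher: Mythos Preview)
Your reduction to the length inequality $b(n-1)\length(J^{[q]}/I_0^{[q]}) \le n\length(\n^{[q]}/\m^{[q]}S) + O(q^{d-1})$ is correct, and the filtration $G_i = (v^{iq})+I^{[q]}S$ is the right one (it is the $[q]$-th power of the paper's chain $(\m,v^j)S$). But the heart of the argument---your claim that the cyclic $R$-submodule of $G_i/G_{i+1}$ generated by $v^{iq}r^q$ has length at least $(b/n)\length(J^{[q]}/I_0^{[q]})$---is not justified, and I do not see how to make it work. There is no mechanism by which the $R$-annihilator of a \emph{single} element $v^{iq}r^q$ modulo $G_{i+1}$ would produce the ratio $b/n$; neither the ``$(vr)^q$-annihilator relation'' nor a generic-rank computation for $S$ over $R$ explains where the $n$ in the denominator comes from. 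Concretely, if $s\in R$ satisfies $sv^{iq}r^q \in (v^{(i+1)q},I^{[q]})S$, multiplying by $v^{(n-i)q}$ gives $s(zr)^q \in (v^{(n+1)q},I^{[q]}v^{(n-i)q})S$, and there is no evident reason this lands in $I_0^{[q]}S$, let alone $I_0^{[q]}$.

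What you are missing is a \emph{second} filtration and a monotonicity step. The paper filters $JS$ over $I_0S$ by $(I_0,zrv^j)S$ for $j=0,\dots,n-1$ (so $n$ steps), and shows via a direct colon computation that $\m^{[q]}S :_S v^{(n-1)q} \subseteq (I_0,zrv^{j+1})^{[q]}S :_S (zrv^j)^q$ for every $j$; this uses $vrIS\subseteq I_0S$ exactly once. Hence every quotient of this second chain has $\ehk$ at most $\ehk(\m S,(\m,v^{n-1})S)$, the bottom quotient of the first chain. Separately, the inclusion $(\m,v^j)^{[q]}S : v^{(j-1)q} \subseteq (\m,v^{j+1})^{[q]}S : v^{jq}$ shows the quotients of the first chain are monotone, so each of its $n-1$ quotients is at least that same bottom quotient. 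Combining, $\ehk(I_0S,JS)\le \frac{n}{n-1}\ehk(\m S,\n)$. The factor $b$ then enters only once, via Theorem~\ref{extendehk} applied to the pair $I_0\subset J$ (not just to $\m$), giving $\ehk(I_0S,JS)=b\,\ehk(I_0,J)$. Your sketch never invokes the second filtration or the monotonicity of the first, and without them the comparison cannot be completed.
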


\begin{proof}
Consider the following sequence of inclusions:

$$ \m S \subset (\m, v^{n-1})S \subset \cdots \subset (\m, v^2)S 
\subset (\m,v)S =\n.$$


It is easy to see that 

$$(\m, v^j)\brq S: v^{q(j-1)} \subset (\m, v^{j+1})\brq S : v^{qj},$$

since, if $cv^{q(j-1)} \in (\m, v^j)\brq S$, then $c v^{qj} \in  (\m,
v^j)\brq v^q S \subset (\m , v^{j+1})\brq S$.

Thus $\ehk(\m S, \n) = \sum_{j=1}^{n-1} \ehk((\m, v^{j+1})S, (\m, v^{j} )S) 
\geq (n-1) \ehk(\m S, (\m, v^{n-1})S ).$

Consider now the filtration

$$ 
I_0 S \inc (I_0, zrv^{n-1})S \inc \cdots \inc (I_0, zrv)S \inc (I_0,zr)S =JS.
$$

Let $s \in (\m\brq S:_S v^{(n-1)q}) = (v^n,I)\brq S:v^{(n-1)q} 
= v^qS+ I\brq S:_S v^{(n-1)q}$.  Then for any $0 \le j < n$,
\begin{align*}
s(zrv^j)^q \in (v^qS+ I\brq S:_S v^{(n-1)q})(zrv^j)^q 
&\inc (zrv^{(j+1)})^qS + (I\brq S:_S v^{(n-1)q})(v^{(n-1)q}r^q v^{(j+1)q}) \\
&\inc (zrv^{(j+1)})^q)S + I\brq r^q v^{(j+1)q}S \inc (zrv^{(j+1)}, I_0)\brq S.
\end{align*}

Thus $\ehk((I_0,zrv^{j+1})S, (I_0,zrv^{j})S) \le \ehk(\m S, (\m, v^{n-1})S )$.

%
%
%

Since in the chain we have at most $n$ inclusions we get,
using Theorem \ref{extendehk} that
$b \ehk(I_0,J) = \e_{HK}(I_0S, (I_0,zr)S) \leq 
n \ehk( \m S, (\m , v^{n-1})S) \leq \frac{n}{n-1} \ehk (\m S, \n) 
= \frac{n}{n-1} (b \ehk(R) - \ehk(S)) $, 

which gives 

$$\ehk (I_0, J) \leq \frac{n}{n-1}\ehk(R) - \frac{n}{b(n-1)} \ehk(S).$$
\end{proof}

\medskip

\noindent
In what follows we consider a Gorenstein local  domain
$(R,\m, K)$ with algebraically closed residue field. Let us fix some notation. Let $d = \dim (R)$ and consider a system of
parameters $\underline{x} = x_1, \cdots, x_d$ that generates a minimal
reduction of $\m$. Also, $k = {\rm embdim}(R) - \dim(R)$. We plan to provide a lower bound greater than $1$ for the Hilbert-Kunz 
multiplicity of $R$. We also assume that $p \neq 2$.  Note that if $k =2$ and $R$ is Gorenstein, then $R$ is a complete intersection.  This is because, after completing, $R$ is the quotient of a regular ring of dimension $d+2$ and has projective dimension 2 over the regular ring.  The only possible resolution in this case is of a regular sequence over the regular ring.

The main result in~\cite{ES} gives the conjectured lower bound for $\ehk(R)$ if $R$ is a complete intersection. So, we will assume that $R$ is not a
complete intersection, hence $k \geq 3$. Moreover, by a result of J.~Sally (Corollary 3.2 in~\cite{Sa2}), no Gorenstein rings except
hypersurfaces can have minimal multiplicity (i.e., $\e(R) = \mu(\m)-d+1$)
 so $\e = \e(R) \geq
k+2$. In particular, $\e \geq 5$.

\begin{Lma}
\label{prep}

Let $(R, \m, K)$ be a local Gorenstein ring, $k = {\rm embdim}(R) -
\dim(R)$ and $\e =\e(R)$. Let $\underline{x} =x_1, \ldots, x_d$ be a
system of parameters for
$R$.

i) The $R/(\underline{x})$-module $\dfrac {(\underline x):\m^2}{(\underline x)}$ is $k$-generated with
one dimensional socle.

ii) Assume that $\underline{x}$ is a minimal reduction for $\m$. Then $ k = \e-2$ if and only if $(\underline{x}) : \m^2 = \m$.

\end{Lma}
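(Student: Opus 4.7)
The strategy is to pass to the Artinian Gorenstein quotient $\bar R := R/(\underline{x})$ with maximal ideal $\bar\m := \m/(\underline{x})$ and exploit its Matlis self-duality. Under the identification
$$\frac{(\underline{x}) : \m^2}{(\underline{x})} \;=\; (0 :_{\bar R} \bar\m^2),$$
both parts of the lemma become questions about the ideal $(0 :_{\bar R} \bar\m^2)$ inside the Artinian Gorenstein local ring $\bar R$. The main tool is Matlis duality on $\bar R$, which for any ideal $I \subseteq \bar R$ pairs $\bar R/I$ with $(0 :_{\bar R} I)$, preserves length, and swaps minimal-generator count with socle dimension.

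For part (i), I would first observe that $\bar R / \bar\m^2$ is cyclic and that its socle equals $\bar\m / \bar\m^2$ (assuming $R$ is non-regular so that $\bar\m \ne 0$, which is the relevant case). The $K$-dimension of $\bar\m/\bar\m^2$ equals $k$: since $\underline{x}$ generates a minimal reduction and $K$ is algebraically closed (hence infinite), the images of $x_1,\ldots,x_d$ in $\m/\m^2$ are $K$-linearly independent, giving $\dim_K \bar\m/\bar\m^2 = \mu(\m) - d = k$. Dualizing then yields $\mu_{\bar R}\bigl((0 :_{\bar R} \bar\m^2)\bigr) = \dim_K \Soc(\bar R/\bar\m^2) = k$, and $\dim_K \Soc\bigl((0 :_{\bar R} \bar\m^2)\bigr) = \mu_{\bar R}(\bar R/\bar\m^2) = 1$, which is exactly part (i).

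For part (ii), the same duality gives $\length\bigl((\underline{x}) : \m^2 / (\underline{x})\bigr) = \length(\bar R / \bar\m^2) = k + 1$, while $\length(\m/(\underline{x})) = \length(\bar R) - 1 = \e - 1$ by Cohen--Macaulayness of $R$ together with the minimal reduction property of $\underline{x}$. The forward direction is a one-line length comparison: $(\underline{x}) : \m^2 = \m$ forces $\e - 1 = k + 1$. For the converse, assume $k = \e - 2$, so $(\underline{x}) : \m^2$ and $\m$ have the same length modulo $(\underline{x})$; it then suffices to show the containment $(\underline{x}) : \m^2 \subseteq \m$. If this failed, $(\underline{x}) : \m^2$ would contain a unit, forcing $\m^2 \subseteq (\underline{x})$ and hence $\bar\m^2 = 0$; but then $\length(\bar R) = 1 + \dim_K \bar\m = 1 + k$, contradicting $\length(\bar R) = \e = k + 2$.

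The only genuinely nontrivial step is ruling out $\m^2 \subseteq (\underline{x})$ in the converse of (ii); this is simply the observation that $\m^2 \subseteq (\underline{x})$ is the minimal multiplicity condition $\e = k+1$, which is incompatible with $k = \e - 2$. Everything else is a clean application of Gorenstein self-duality together with the standard identification of $\mu(\m) - d$ with $k$ for minimal reductions over an infinite residue field.
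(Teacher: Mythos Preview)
Your proof is correct. For part~(i) it coincides with the paper's: both identify $(\underline{x}):\m^2/(\underline{x})$ with the Matlis dual of $\bar R/\bar\m^2$ over the Artinian Gorenstein ring $\bar R=R/(\underline{x})$ and read off the generator count and socle dimension from those of the dual. (You justify $\dim_K\bar\m/\bar\m^2=k$ by invoking the minimal-reduction hypothesis, which formally belongs only to part~(ii); the paper simply asserts the $k$-dimensional socle without comment. In either case the tacit assumption is that $x_1,\dots,x_d$ are linearly independent modulo $\m^2$, which is the only situation used downstream.)

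For part~(ii) your route genuinely differs from the paper's. The paper cites Sally's characterization (Proposition~4.2 of \cite{Sa2}) that $k=\e-2$ if and only if $\m^3\subseteq(\underline{x})\m$ and $\length(\m^2/(\underline{x})\m)=1$, using this for one implication; for the other it assumes $(\underline{x}):\m^2=\m$, shows $\m\cdot\bigl((\underline{x}):\m^2\bigr)+(\underline{x})=(\underline{x}):\m$, and then computes $k=\mu\bigl((\underline{x}):\m^2/(\underline{x})\bigr)=\dim_K\bigl(\m/((\underline{x}):\m)\bigr)=\e-2$. Your argument avoids the external citation entirely: from duality you get $\length\bigl((\underline{x}):\m^2/(\underline{x})\bigr)=k+1$, from Cohen--Macaulayness and the reduction hypothesis $\length(\m/(\underline{x}))=\e-1$, so equality of the two ideals is equivalent to $k+1=\e-1$ together with the containment $(\underline{x}):\m^2\subseteq\m$, and you dispatch the latter by observing that its failure forces $\bar\m^2=0$ and hence $\e=k+1$. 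This is more self-contained and arguably cleaner; the paper's approach, on the other hand, situates the statement within the established structure theory of Gorenstein rings of almost-minimal multiplicity.
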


\begin{proof}
For i), note that $R/(\underline{x})$ is Gorenstein and hence we can use Matlis duality. The module $\dfrac {(\underline x):\m^2}{(\underline x)}$ is Matlis dual
to $\dfrac R{(\underline x)+\m^2}$.  $\dfrac R{(\underline x)+\m^2}$
is cyclic with $k$ dimensional socle, therefore 
$\dfrac {(\underline x):\m^2}{(\underline x)}$ is $k$-generated with
one dimensional socle.

To prove part (ii), we recall Proposition 4.2 in~\cite{Sa2} which says in our case that
$k =\e-2 $ if and only if $\m^3 \subset (\underline{x}) \cdot \m$ and $\length(\m^2 / (\underline{x}) \cdot \m)=1$. Hence one direction of (ii) follows at once.
Now assume that $\m ^3 \subset (\underline{x})$. Note that 
$$(\underline{x}) \subseteq \m (\underline{x} : \m^2)+ (\underline{x}) \subseteq (\underline{x}) : \m \subset (\underline{x} : \m^2),$$
and since $R$ is Gorenstein we must have $\m (\underline{x} : \m^2)+
(\underline{x}) = (\underline{x}) : \m$.

Therefore $ \m \cdot \dfrac {(\underline
  x):\m^2}{(\underline{x})} = \dfrac{(\underline{x}) :
  \m}{(\underline{x})},$ and this shows that

$$ k = \mu (\dfrac {(\underline{x}):\m^2}{(\underline x)}) = \dim _K
(\dfrac{(\underline{x}) : \m^2}{(\underline{x}): \m}) = \dim_K
  (\dfrac{\m}{(\underline{x}):\m}) = \length
  (\dfrac{\m}{(\underline{x}):\m}) = \e-2,$$
because $\length(R/\underline{x})  = \e$ ($\underline{x}$ forms a
minimal reduction for $\m$.)

\end{proof}

Let $(R, \fm, K)$ be a local ring with infinite residue field and of dimension $d$. According to a result due to Northcott and Rees and, independently, Trung (see Theorem 8.6.6 in~\cite{HS}) there exists a Zariski open subset $U$ of $(\fm /\fm ^2)^d$ such that any $x_1, \ldots, x_d$
with $(x_1+ \fm^2, \ldots, x_d + \fm^2) \in U$ form a minimal reduction for $\fm$. We will call a set $U$ with this property {\it reduction open}.

\begin{Lma}
\label{choice}

Let $(R, \m, K)$ be a local Gorenstein ring containing an infinite field of positive prime characteristic $p >2$. Assume that $k={\rm embdim}(R) -\dim(R) \geq 2$.
Let $U$ be a reduction open subset of $(\fm/\fm^2)^d$. Let $\underline{x}$ be in $\m$ such that  $(x_1+ \fm^2, \ldots, x_d + \fm^2) \in U$

Then, we may pick minimal
generators $z_1, \ldots, z_k$ for $\dfrac {(\underline{x}):\m^2}
{(\underline x)}$ and a minimal generator $z$ of $\m$ such that $z z_i \notin (\underline{x})$ for $1 \le i \le k$ and 
$z, x_2, \ldots, x_d$ form a minimal reduction of $\m$.

If $k \neq \e-2$, then $z$ can be picked not in $(\underline x):\m^2$. If $k=\e-2$, one may take $z=z_1$.

\end{Lma}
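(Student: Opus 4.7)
The strategy is to produce $z$ and $z_1,\dots,z_k$ as generic vectors: forcing the relevant image in $\m/\m^2$ (or $M/\m M$) to avoid a finite list of proper $K$-subspaces, which is possible because $K$ is infinite. The key observation is that for any $y\in(\underline{x}):\m^2$, since $y\m^2\subset(\underline{x})$, multiplication by $y$ descends to a $K$-linear map $\phi_y:\m/\m^2\to R/(\underline{x})$, $\bar z\mapsto\overline{zy}$, and its kernel $H_y$ is a proper subspace of $\m/\m^2$ exactly when $y\notin(\underline{x}):\m$---equivalently, when the image of $y$ in $M:=((\underline{x}):\m^2)/(\underline{x})$ lies outside the one-dimensional socle $\sigma:=((\underline{x}):\m)/(\underline{x})$ provided by Lemma~\ref{prep}(i).

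In the case $k\ne\e-2$, Lemma~\ref{prep}(ii) gives $(\underline{x}):\m^2\subsetneq\m$, and Nakayama applied to $\m/((\underline{x}):\m^2)$ forces the image $V$ of $(\underline{x}):\m^2$ in $\m/\m^2$ to be a proper subspace. Since $k\ge 2$ and the image of $\sigma$ in $M/\m M$ has dimension at most $1$, I pick a basis $\bar z_1,\dots,\bar z_k$ of $M/\m M$ with each vector outside this image (inductively avoiding a union of two proper subspaces, which works over the infinite field $K$), and lift to $z_i\in(\underline{x}):\m^2$ with $z_i\notin(\underline{x}):\m$; then each $H_{z_i}$ is proper. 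Now pick $\bar z\in\m/\m^2$ in the nonempty Zariski open intersection of (a) $(\bar z,\bar x_2,\dots,\bar x_d)\in U$ (nonempty since $\bar x_1$ qualifies), (b) $\bar z\notin H_{z_i}$ for each $i$, and (c) $\bar z\notin V$. Any such $z$ satisfies the conclusion together with $z\notin(\underline{x}):\m^2$.

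In the case $k=\e-2$, Lemma~\ref{prep}(ii) gives $M=\m/(\underline{x})$, and one must take $z_1=z$. First note $\m^2\not\subset(\underline{x})$: otherwise the step recalled in the proof of Lemma~\ref{prep}(ii) gives $\m^2=\underline{x}\m$, forcing $R$ to be of minimal multiplicity with $\e=k+1$, contradicting $\e=k+2$. Now pick $\bar z\in\m/\m^2$ in the intersection of (a) the reduction-open condition, (b$'$) $\bar z$ is not in the image of $(\underline{x}):\m$ in $\m/\m^2$ (a proper subspace of dimension at most $d+1<d+k$), and (c$'$) $z^2\notin(\underline{x})$ (nonempty open since $\m^2\not\subset(\underline{x})$). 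Setting $z_1=z$, condition (b$'$) makes $z_1$ a minimal generator of $M$ and forces the kernel $K_z\subset M/\m M$ of multiplication by $z$ to be proper, while (c$'$) yields $z_1\notin K_z$. Extend $z_1$ to a basis $z_1,\dots,z_k$ of $M/\m M$ by inductively choosing $z_j$ outside $K_z\cup\mathrm{span}(z_1,\dots,z_{j-1})$, a union of two proper subspaces; this gives $zz_j\notin(\underline{x})$ for all $j$. The main subtlety is precisely this coupling: $z$ must coincide with $z_1$, and the fix is the two-step generic choice in which (b$'$) and (c$'$) together leave enough room in $M/\m M$ to complete the basis.
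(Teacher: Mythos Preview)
Your argument is correct. In the case $k \neq \e-2$ it is essentially the paper's proof: both pick any minimal generating set for $M=((\underline x):\m^2)/(\underline x)$, observe each $z_i\notin(\underline x):\m$, and then choose $z$ in the slice $U_1$ avoiding the hyperplanes $((\underline x):z_i)/\m^2$ and the image of $(\underline x):\m^2$. (A small remark: since the paper shows $\m M=\sigma$, the image of $\sigma$ in $M/\m M$ is actually zero, so your ``avoid the image of $\sigma$'' step is vacuous; any basis works.)

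In the case $k=\e-2$ your route is genuinely different. The paper first fixes an arbitrary basis $z_1,\dots,z_k$, uses the bilinear pairing $z_iz_j\equiv r_{ij}u\pmod{(\underline x)}$ to locate a nonzero $r_{12}$, and then deforms along a one-parameter family $z_1'=z_1+yz_2$, $z_j'=z_j+yz_1'$, checking that the resulting cubic and quadratic polynomials in $y$ are nonzero so that a generic $y$ works and simultaneously lands $z_1'$ in $U_1$. You instead pick $z$ first, as a point in the Zariski-open locus cut out by (a) the reduction condition, (b$'$) $z\notin(\underline x):\m$, and (c$'$) $z^2\notin(\underline x)$, and then complete $z_1=z$ to a basis of $M/\m M$ avoiding the proper subspace $K_z$. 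This is cleaner: it replaces the explicit polynomial computation by a single quadric condition and a subsequent linear-algebra extension. What the paper's approach buys is that the nonemptiness of each condition is witnessed by an explicit coefficient ($2\hat r_{12}\neq 0$), whereas in your approach the nonemptiness of (c$'$) needs the polarization identity $4ab=(a+b)^2-(a-b)^2$ to pass from $\m^2\not\subset(\underline x)$ to the existence of $z$ with $z^2\notin(\underline x)$. Both arguments use $p>2$ at exactly this point, though you should make the polarization step explicit. You should also note, for (c$'$) to be a condition on $\bar z\in\m/\m^2$, that $\m^3\subset(\underline x)$ (which holds since $(\underline x):\m^2=\m$), so $z^2\bmod(\underline x)$ depends only on $\bar z$ and defines a quadric.
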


\begin{proof}

Clearly $$(\underline{x}) \subseteq \m (\underline{x} : \m^2)+ (\underline{x}) \subseteq (\underline{x}) : \m \subset (\underline{x} : \m^2),$$
and since $R$ is Gorenstein we must have $\m (\underline{x} : \m^2)+ (\underline{x}) = (\underline{x}) : \m$. This is the case because
$(\underline{x}) = \m (\underline{x} : \m^2)+ (\underline{x})$ gives $\m (\underline{x} : \m^2) \subseteq (\underline{x})$ or
$(\underline{x} : \m^2)= (\underline{x}) : \fm$ which contradicts the fact that $k \geq 2$.

Choose $z_1, \ldots, z_k$ in $R$ such that their images form a minimal set of generators for $\dfrac {(\underline{x}):\m^2}
{(\underline x)}$. We conclude that each $z_i \notin  \m (\underline{x} : \m^2)+ (\underline{x})$, and so
$z_i \notin (\underline{x}) : \fm$, $i=1, \ldots, k$. Note that $z_i \in (\underline{x}) : \fm^2$ and hence $\fm^2 \subset (\underline{x}): z_i$ for all $i=1, \ldots, k$.

Let $U_1 = \{ z + \fm ^2 \in \fm/\fm^2 : (z+\fm^2, x_2 +\fm^2, \ldots, x_d+\fm^2) \in U \}$. Then $U_1$ is  a Zariski open subset  of $\fm / \fm^2$. In what follows, for $a\in R$, $\overline{a}$ will denote
the class of the element $a \in R$ modulo $\fm^2$, $\hat{a}$ the class of $a$ in $R/\fm$ and $\tilde{a}$ the class of $a$ in $R/(\underline x)$.

Then $$ U_1 \not\subseteq \cup_i ((\underline{x}) : z_i)/\fm^2,$$ since otherwise there exists $i$ such that
$U_1 \subseteq ((\underline{x}) : z_i)/\fm ^2$ which gives $\fm \subseteq ((\underline{x}) : z_i)$ or $z_i \in (\underline{x}) : \m$ which is not the case (over an infinite field, a dense Zariski open 
subset cannot be covered by a finite union of proper vector subspaces due to dimension reasons.)

Note that $(\underline{x}: \fm^2) + \fm^2 = \fm$ implies by NAK that $(\underline{x}: \fm^2) =\fm$. So,  a similar argument shows that when $\m \neq (\underline{x} : \m^2)$ one has that 

$$ U_1 \not\subseteq \cup_i ((\underline{x}) : z_i)/ \m^2 \cup ((\underline{x}: \fm^2) + \fm^2)/\fm^2.$$

This guarantees that, in either case, one can pick $z$ a minimal generator of $\m$ such that 
$z z_i \notin (\underline{x})$ for $1 \le i \le k$ and that $z \notin (\underline x):\m^2$, whenever
$\m \neq (\underline{x} : \m^2)$. 

Let us note that $k =\e-2$ is equivalent to $(\underline{x} : \m^2) = 
\m$ by Lemma~\ref{prep}.

Whenever  $(\underline{x}  : \m^2) = \m$, we know that no $z_i$ can kill all $z_j$ modulo $\underline{x}$.
So for all $i,j$, there exists $r_{ij} \in R$ such that $\tilde{z_i}\tilde{z_j} = r_{ij} \tilde{u}$, where $u$ gives the socle generator of $R/(\underline{x})$. Here,
each $r_{ij}$ is an element in $R$, and for each $i$ there exists $j$ such that $\hat{r}_{ij}$ in $R/\fm$ is  nonzero. After renumbering, we can assume that $\hat{r}_{12} \neq 0$. Since $R$ contains an infinite field we have that $K$ is infinite as well.
Let $z'_1 = z_1 +yz_2$ where $y\in R$.  Let $C$  be the set $ \{\overline{z_1} +\overline{y} \cdot \overline{z_2}, y \in R \}$ in 
$\m / \m^2$. This is a line in the $(k+d)$-dimensional space $\m / \m^2$.

Let $z'_j = z_j +yz'_1 = z_j + yz_1+y^2z_2$ for all $j \geq 2$.

We will find $y \in R $ such that
${z'_1}^2 \notin (\underline{x}) $ and for all $j \geq 2$, $z'_1 z'_j \notin (\underline{x})$, and $\overline{z'_1} \in U_1 $.

Computing $\tilde{z'_1}^2 = (\hat{r}_{11}^2 + 2\hat{r}_{12}\hat{y}  + \hat{r}_{22}\hat{y}^2)\tilde{u}$ and $\tilde{z'_1}\tilde{z_j} = [\hat{r}_{1j} + (\hat{r}_{2j}+ \hat{r}_{11})\hat{y} +2\hat{r}_{12}\hat{y}^2 + \hat{r}_{22}\hat{y}^3]\tilde{u}$, $j =2,\ldots, k$ gives $k$ polynomial functions in $\hat{y} \in K$.  Each polynomial is not identically zero because
$2 \hat{r}_{12} \neq 0$. Let $U = \{ \hat{y} \in R/\fm =K : \tilde{z'_1}^2 \neq 0, \tilde{z'_1}\tilde{z'_j} \neq 0, \forall j =2, \ldots, k \}$.
This is an open nonempty subset of $K$. For any choice of $y \in R$ such that $\hat{y} \in U$ we have that ${z'_1}^2 \notin (\underline{x}) $ and for all $j \geq 2$, $z'_1 z'_j \notin (\underline{x})$.

Note that $C \cap U_1$ is an open subset in $C$. Since $C$ is isomorphic to  $K$ we have a open subset of $K$, say $U'$, such that  for all $y \in R$ such that $\hat{y} \in U'$, $\overline{z}_1+ \overline{y} \cdot \overline{z}_2$ belongs to $U_1$.
Now, since $K$ is infinite $U'$ and $U$ must intersect so we can choose $y \in R$ such that $\hat{y} \in U \cap U'$.

To finish the argument here, it is enough to note that we can swap now $z_1$ for $z'_1$ and $z'_j$ for $z_j$ corresponding to our choice for $y$, and the conditions are now satisfied.

\excise{
Let $t$ equal the maximum number of generators $z_1, ..., z_t$ of $(\underline{x} : \m^2)$ modulo $(\underline{x})$ such that
there exist a minimal generator $z$ of $\m$ such that $zz_i \notin (\underline{x})$. 

 Note that
$t \geq 1$. We will show that $t=k$. Assume $t < k$. Complete the list to a full set of minimal generators $z_1, ..., z_t, z_{t+1}, ..., z_k$ of 
$(\underline{x} : \m^2)$ modulo $(\underline{x})$. For $ i \geq t+1$ we must have $zz_i \in (\underline{x})$.

Note that each $z_i$ belongs to $(\underline{x} : \m^2)$ but is not in $\m (\underline{x} : \m^2)+ (\underline{x})$.
Clearly $$(\underline{x}) : \m \subseteq \m (\underline{x} : \m^2)+ (\underline{x}) \subset (\underline{x} : \m^2),$$
which implies that $z_i \notin (\underline{x}: \m)$, in particular $z_{t+1} \notin (\underline{x}: \m)$. Therefore we 
can choose a minimal generator $z'$ of $\m$ for which $z' z_{t+1} \notin (\underline{x})$. 

But then there exists $\alpha \in K$ such that an element of the form
$z+ \alpha z'$ is a minimal generator for $\m$ and $(z+ \alpha z') z_{i} \notin (\underline{x})$ for all $i =1,..., t+1$:
indeed, this follows immediately for $i = t+1$ due to our choices, as for $i \leq t, zz_i$ is a multiple of the socle generator of $\underline{x} : \m$ and since $K$ is infinte we can choose
$\alpha$ such that $\alpha z' z_i$ is not the same multiple of the socle generator. 

Therefore, we contradicted the maximality of $t$. Hence we must have $t=k$.

}

\end{proof}

From now on, let us fix $z_1,\ldots, z_k \in R$ chosen as in Lemma~\ref{choice}.

Thus, modulo $(\underline{x})$, each $zz_i$, $i =1,
\cdots, k$ generates
the socle of $R/(\underline{x})$.

Let us denote $J_i = (z_i, \cdots, z_k, \underline{x})$, for all $i =
1, \cdots, k$.

Let $u$ in $R$ be an element
that generates the socle of $R/(\underline{x})$. Denote
  $J = (\underline{x}, u)$. Note that according to our
  remark on the elements $zz_i$, $J = (I, zz_i)$  for $i =1, \ldots, k$.

Denote $L_i = (\underline{x}, z_i)$ and $B_i = (\underline{x}) : L_i$.
Note that $L_k = J_k$.  Since $z_i \in (\ux):\m^2 - (\ux):\m$, the
chain $(\ux,z_i) \supsetneq (\ux, u) \supsetneq (\ux)$ is saturated,
i.e., $\length(L_i/(\ux)) = 2$.  So by duality, $\length(R/B_i) = 2$.
Since $zz_i \notin (\ux)$, the chain $R \supsetneq (z,B_i)=\m \supsetneq B_i$
is saturated.

For any $ q = p^e$, let $G_q = (\underline{x}\brq : \fm \brq)$. Note
that $J \brq \subset G_q$.

Consider a radical extension for $R$ and $z$, $S = R[v]$ such that 
$v^n = z$. Since $R$ is Henselian and $z \in \m$, $S$ is local.
Set $b = [\ff(S):\ff(R)] (\le n)$.
 Denote $\ehk(R) = 1 + \epsilon_R$, $\ehk(S) = 1 +
\epsilon_S$.

\medskip
\noindent
In what follows we will make a sequence of claims that will lead to our
main result.

\medskip
{\bf Claim (1):} $\ehk(B_i, \fm) \leq \frac{n}{n-1} \ehk(R) -
\frac{1}{b(n-1)}\ehk(S)$.

From our observations above about $R/B_i$ we can apply Lemma~\ref{bound} with
$I = B_i$ and $J = \m$
 to get $\ehk(B_i, \fm) \leq \frac{n}{n-1} \ehk(R) - \frac{n}{b(n-1)}\ehk(S)$.

\medskip
{\bf Claim (2):} $$\lim_{q \to \infty} \frac{1}{q^d} \length (G_q/J\brq) =
\ehk(R) -\ehk((\underline{x}), J).$$

We observe that $R/ (\underline{x}) \brq$ is Gorenstein Artinian.

So, by duality, 
$\length \left(\dfrac R{(\ux)\brq}\right) 
=\length\left(\Hom_R \left( \dfrac R{L\brq},
                        \dfrac R{(\underline{x})\brq}\right)\right)=
\length \left(\dfrac{(\ux)\brq : L\brq}{ (\ux)\brq}\right) $, for any
$\fm$-primary ideal $L$ in $R$. 

Let $L = \fm$ and we obtain $\length (G_q/(\underline{x})\brq) = \length
(R/ \fm \brq)$, so $\length (R/G_q) = \length (R/(\underline{x})\brq))
- \length (R/ \fm \brq)$, which is the same as

$$\length (G_q /J \brq) = \length (R/ \fm \brq) - \left(\length (R /
(\underline{x})\brq) - \length(R / J \brq)\right).$$ 
Dividing by
$q^d$, and taking the limit as $q \to \infty$ gives the claim.

\medskip
{\bf Claim (3):} $$\length (G_q/J\brq) \geq \length (\frac{\sum_{i=1}^k (L_i
  \brq \cap G_q)}{J \brq})$$

This is immediate since $\sum_{i=1}^k (L_i
  \brq \cap G_q) \subset G_q$.

Now, we need to introduce further notation:

for $i = 1, \ldots, k-1$, we let $$N_{i,q} = \frac{L_i \brq \cap G_q}{J
  \brq},$$ and put

$$ a_i : = {\rm limsup} \frac{1}{q^d} \length (\frac{ (L_i \brq \cap
  G_q) \cap \sum_{j =i+1}^k (L_j \brq \cap G_q)}{J \brq}), $$ so

$$ a_i = {\rm limsup} \frac{1}{q^d} \length (N_{i,q} \cap
  \sum_{j=i+1}^k N_{j,q}).$$
 
We set $a_k =0$.

\medskip
{\bf Claim (4):}   For any $i_0 \in \{1,\ldots,k-1\}$

$$\length (\sum_{i=i_0}^k N_{i,q}) = \sum_{i=i_0}^k \length
(N_{i,q}) -\sum_{i=i_0}^{k-1} \length (N_{i,q} \cap
  \sum_{j=i+1}^{k} N_{j,q}).$$

Write the following exact sequence

$$ 0 \to N_{i,q} \cap \sum_{j =i+1}^k N_{j,q} \to N_{i,q} \oplus \sum_{j
  =i+1}^k N_{j,q} \to \sum_{j =i}^k N_{j,q} \to 0$$

and now start with $i=i_0$ and recursively one gets the claim.

\medskip
{\bf Claim (5):} $$\length (N_{i,q}) \geq \length (\frac{L_i \brq}{J \brq}) -
\length (\frac{\fm \brq}{B_i \brq}).$$

From the short exact sequence

$$ 0 \to N_{i,q} \to \frac{L_i \brq}{J \brq} \to \frac{L_i
  \brq}{L_i\brq \cap G_q} \to 0$$

we see that $\length (\frac{L_i \brq}{J \brq}) = \length (N_{i,q}) +
\length (\frac{L_i \brq}{L_i\brq \cap G_q})$.

But $$\length (\frac{L_i \brq}{L_i\brq \cap G_q} )= \length( \frac{L_i \brq + G_q}{G_q}) \leq
  \length(\frac{(\underline{x}) \brq : B_i \brq}{G_q}) =\length( \frac{\fm \brq}{B_i \brq}).$$.

Hence $$\length (N_{i,q}) = \length (\frac{L_i \brq}{J \brq}) -
\length (\frac{L_i \brq}{L_i\brq \cap G_q}) \geq \length (\frac{L_i \brq}{J \brq}) -
\length (\frac{\fm \brq}{B_i \brq}).$$

\medskip

{\bf Claim (6):} $$\length (\frac{L_i \brq}{J \brq}) = \length (\frac{J_i
  \brq}{J_{i+1} \brq}) + \length (\frac{L_i \brq \cap J_{i+1} \brq}{J
  \brq}).$$

For all $i = 1, \ldots, k-1$, 

$L_i \brq + J_{i+1} \brq = J_i \brq$, so

$$ \frac{L_i \brq}{J \brq} / \frac{L_i \brq \cap J_{i+1} \brq}{J\brq}
\simeq \frac{J_i \brq}{J_{i+1} \brq}$$

and this gives the claim.

\medskip
{\bf Claim (7):} $$\length (\frac{ L_i \brq \cap G_q \cap (\sum_{j =i+1}^k
  L_j \brq \cap G_q)}{J \brq}) \leq \length (\frac{L_i \brq \cap
  J_{i+1} \brq}{J \brq}).$$

This follows immediately as  $L_i \brq \cap G_q \cap (\sum_{j =i+1}^k
  L_j \brq \cap G_q) \subset L_i \brq \cap J_{i+1} \brq$, since
$L_j \subseteq J_{i+1}$ for all $j \geq i+1$.

\begin{Thm}
\label{ineq3}
Let $(R, \fm)$ be a local Gorenstein ring. Let
$\underline{x}$ be a minimal reduction generated by a system of
parameters and let $z \in \fm \setminus (\underline{x})$ be 
a minimal generator of $\m$ picked as described above.

Let $S=R[v]$ be a radical extension for $R$ and $z$ of degree $n$. 
Let $b = [ff(S):ff(R)]$.  Then
\[
\ehk(R) \ge 
\begin{cases}
\dfrac{\e(n-1)}{\e n-2} + \dfrac{n(\e-2)}{b(\e n-2)} \ehk(S) & \text{if $k=\e-2$;}\\
\dfrac{\e(n-1)}{(n-1)\e + k +1}+ \dfrac{n(k+1)}{b\big((n-1)\e+k+1\big)}\ehk(S)
&\text{if $k < \e-2$.}
\end{cases}
\]
\end{Thm}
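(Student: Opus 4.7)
The plan is to combine Claims (1)--(7) with Lemmas \ref{prep}, \ref{choice}, and \ref{bound} into a single chain of inequalities, first deriving a master lower bound for $\ehk(R)$ that involves $\ehk(J_1)$ and the quantities $\ehk(B_i,\m)$, then specializing to the two cases $k=\e-2$ and $k<\e-2$.

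The master inequality comes from chaining the above claims. Starting from Claim (2), $\ehk(R) - \ehk((\underline{x}), J)$ equals $\lim_q \length(G_q/J\brq)/q^d$, which by Claim (3) is bounded below by $\lim_q \length(\sum_i N_{i,q})/q^d$. Expanding via the inclusion-exclusion of Claim (4), applying Claim (5) from below to each $\length(N_{i,q})$, and Claims (6)--(7) from above to each pairwise intersection, the $\length(L_i\brq/J\brq)$ terms cancel for $i<k$, and the $\length(J_i\brq/J_{i+1}\brq)$ pieces telescope together with the surviving tail $\length(L_k\brq/J\brq)=\length(J_k\brq/J\brq)$ to produce $\length(J_1\brq/J\brq)$. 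Dividing by $q^d$, passing to the limit, and using $\ehk((\underline{x}),J) = \e - \ehk(J)$ to cancel the $\ehk(J)$ terms yields
$$\ehk(R) \;\ge\; \e - \ehk(J_1) - \sum_{i=1}^k \ehk(B_i,\m).$$
Substituting the bound from Claim (1) and rearranging gives the master inequality $\ehk(R)(n-1+kn) \ge (n-1)(\e - \ehk(J_1)) + (kn/b)\ehk(S)$.

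When $k=\e-2$, Lemma \ref{prep}(ii) yields $J_1 = (\underline{x}:\m^2) = \m$, so $\ehk(J_1)=\ehk(R)$; moving this to the left and using $2(n-1)+(\e-2)n = \e n - 2$ recovers the first formula of the theorem. When $k<\e-2$ the ideal $J_1$ is strictly smaller than $\m$ and satisfies $\ehk(J_1)\ge\ehk(R)$, which goes the wrong way for a direct substitution. My plan in this case is to exploit the ``free'' generator $z\notin J_1$ supplied by Lemma \ref{choice}: extend the family $\{L_i\}_{i\le k}$ by one additional term $L_{k+1}$ constructed from $z$, so that the top of the telescope becomes $J_1+(z)$ and Claim (1) is applied $k+1$ times in place of $k$. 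Rearrangement of the resulting inequality should then produce $\ehk(R)((n-1)\e + k + 1) \ge (n-1)\e + (n(k+1)/b)\ehk(S)$, matching the second formula of the theorem.

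The main obstacle is the bookkeeping for the $L_{k+1}$ extension when $k<\e-2$. One must strengthen the choice of $z$ from Lemma \ref{choice} to additionally guarantee $z^2\notin(\underline{x})$, so that $z\cdot z$ lands in the socle of $R/(\underline{x})$ just as the products $zz_i$ do for $i\le k$. One must then verify the analogues of Claims (4)--(7) with the enlarged index set and check that the hypotheses of Lemma \ref{bound} apply to the new $B_{k+1} = (\underline{x}):z$, yielding the same upper bound as for the other $\ehk(B_i,\m)$. Once these details are in place, the algebra carried out in the $k=\e-2$ case goes through with $k$ replaced by $k+1$ in the appropriate slots.
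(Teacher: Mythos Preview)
Your master inequality $\ehk(R)\ge \e-\ehk(J_1)-\sum_{i=1}^k\ehk(B_i,\m)$ and the $k=\e-2$ case are correct and match the paper's argument exactly. The gap is in the $k<\e-2$ case, and it is not just bookkeeping.

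Your plan is to set $L_{k+1}=(\underline{x},z)$, $B_{k+1}=(\underline{x}):z$, and then invoke the analog of Claim~(1) to bound $\ehk(B_{k+1},\m)$ by $\tfrac{n}{n-1}\ehk(R)-\tfrac{n}{b(n-1)}\ehk(S)$. But Lemma~\ref{bound} requires $\length(\m/B_{k+1})=1$; by Matlis duality in $R/(\underline{x})$ this is equivalent to $\length(L_{k+1}/(\underline{x}))=2$, which in turn is equivalent to $z\in(\underline{x}):\m^2$. In the case $k<\e-2$, Lemma~\ref{choice} specifically arranges $z\notin(\underline{x}):\m^2$, so $\length(\m/B_{k+1})\ge 2$ and Lemma~\ref{bound} does not apply to the pair $(B_{k+1},\m)$. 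Your proposed fix, forcing $z^2\notin(\underline{x})$, is a different condition: it does not make $z^2$ a socle element (that would need $z\m^2\subseteq(\underline{x})$, again $z\in(\underline{x}):\m^2$), nor does it reduce $\length(\m/B_{k+1})$ to $1$. So the ``same upper bound as for the other $\ehk(B_i,\m)$'' is not available here.

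The paper handles $k<\e-2$ differently: it keeps the master inequality with only the original $k$ terms and instead bounds $\ehk(J_1,\m)$ directly. One chooses an ideal $J_0$ with $J_1\subseteq J_0\subsetneq\m$, $\length(\m/J_0)=1$, and $\m=(z)+J_0$ (concretely $J_0=(y_2,\dots,y_h)+\m^2$ where $z,y_2,\dots,y_h$ minimally generate $\m$). Lemma~\ref{bound} applies to the pair $(J_0,\m)$ with $r=1$, giving $\ehk(J_0,\m)\le\tfrac{n}{n-1}\ehk(R)-\tfrac{n}{b(n-1)}\ehk(S)$, while the remaining piece is bounded trivially by $\ehk(J_1,J_0)\le(\e-k-3)\ehk(R)$. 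Substituting both into $\e\le\sum_{i=1}^k\ehk(B_i,\m)+\ehk(J_1,\m)+2\ehk(R)$ and simplifying yields the second displayed inequality. The point is that the extra application of Lemma~\ref{bound} is made at a colength-one step at the \emph{top} of the filtration, where the hypothesis $\length(\m/J_0)=1$ is arranged by hand, rather than forced through the duality that governs the $B_i$.
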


For $n = b = 2$, the first case gives $\ehk(R) \ge \dfrac{\e}{2(\e-1)} + 
\dfrac{\e-2}{2(\e-1)} \ehk(S)$ and the second case gives
$\ehk(R) \ge \dfrac \e{\e+k+1} + \dfrac{k+1}{\e+k+1} \ehk(S)$.

\begin{proof}
We will keep the notation introduced above and make references to
the claims just proved.

We see that $\length (\frac{G_q}{J \brq} ) \geq \length (\sum_{j=1}^k
  N_{j,q})$ and by Claim (4) and (5) we get

\begin{align*}
\length \left(\dfrac{G_q}{J \brq} \right) \geq \sum_{i=1}^k \length
(N_{i,q}) -\sum_{i=1}^{k-1} \length (N_{i,q} \cap
  & \sum_{j=i+1}^{k} N_{j,q}) 
\\
& \geq \sum_{i=1}^k \left( \length \left(\dfrac{L_i \brq}{J \brq}\right) -
\length \left(\dfrac{\fm \brq}{B_i \brq}\right)\right) 
-\sum_{i=1}^{k-1} \length (N_{i,q} \cap
  \sum_{j=i+1}^{k} N_{j,q}),
\end{align*}

 which by Claim (7) leads to 

$\length \left(\dfrac{G_q}{J \brq} \right) 
\geq \sum_{i=1}^k \length \left(\dfrac{L_i
    \brq}{J \brq}\right) - \sum_{i=1}^{k-1} \length \left(\dfrac{L_i \brq \cap
  J_{i+1} \brq}{J \brq}\right) 
- \sum_{i=1}^k \length \left(\dfrac{\fm \brq}{B_i \brq}\right)$

and now using Claim (6) this last term can be bounded below  by 

$$
\sum_{i=1}^{k-1} \length \left(\frac{J_i \brq}{J_{i+1} \brq}\right) -
\sum_{i=1}^k \length \left(\frac{\fm \brq}{B_i \brq}\right) 
+\length \left(\frac{L_k \brq}{J \brq}\right).
$$

But $L_k = J_k$, so we get

$$
\length \left(\frac{G_q}{J \brq} \right) 
\geq \sum_{i=1}^{k-1} \length \left(\frac{J_i \brq}{J_{i+1} \brq}\right) -
\sum_{i=1}^k \length \left(\frac{\fm \brq}{B_i \brq}\right) 
+ \length \left(\frac{J_k \brq}{J \brq}\right).
$$

Dividing by $q^d$ and taking the limits leads to

$$
\frac{1}{q^d} \lim_{q \to \infty} \length \left(\frac{G_q}{J \brq} \right) 
\geq \sum_{i=1}^{k-1} \ehk (J_{i+1} , J_i ) -
\sum_{i=1}^k \ehk(B_i, \fm) + \ehk(J, J_k).
$$

Consider the filtration

$$ 
(\underline{x}) \subseteq J \subseteq J_k \subseteq \cdots \subseteq J_2 \subseteq
J_1 \subseteq \fm.
$$

So, $\ehk ((\underline{x}))- \ehk (R)
= \ehk ((\underline{x}), J) + \ehk(J, J_k) + 
\sum_{i=1}^{k-1} \ehk(J_{i+1}, J_i)  + \ehk (J_1, \fm)$.

We have that $\ehk ((\underline{x})) = \e$ and $\lim_{q \to \infty} \frac{1}{q^d} \length (G_q/J\brq) =
\ehk(R) -\ehk((\underline{x}), J)$ as shown in Claim (2),

So, $\e -2\ehk(R) + \lim_{q \to \infty} \frac{1}{q^d} \length (G_q/J\brq) 
=\e -2\ehk (R) +\ehk(R) - \ehk((\underline{x}), J) =  
\ehk(J, J_k) + \sum_{i=1}^{k-1} \ehk(J_{i+1}, J_i) + \ehk (J_1, \fm)$.

But, $$\frac{1}{q^d} \lim_{q \to \infty} \length (\frac{G_q}{J \brq} ) 
\geq \sum_{i=1}^{k-1} \ehk (J_{i+1} , J_i ) -
\sum_{i=1}^k \ehk(B_i, \fm) +\ehk(J, J_k),$$ which says that

$\e -2\ehk(R) + \sum_{i=1}^{k-1} \ehk (J_{i+1} , J_i ) -
\sum_{i=1}^k \ehk(B_i, \fm) + \ehk(J, J_k) 
\leq \ehk(J, J_k) + \sum_{i=1}^{k-1} \ehk(J_{i+1}, J_i) + \ehk (J_1, \fm).$

By cancelling out the common terms, we see that

$\e \leq  \sum_{i=1}^k \ehk(B_i, \fm) +\ehk(J_1, \fm) + 2\ehk(R)$.

But $\ehk(J_1, \fm ) = \ehk (J_1) -\ehk (R)$.

We have also proved earlier that 
$ \ehk(B_i, \fm) \leq \frac{n}{n-1} \ehk(R) - \frac{n}{b(n-1)}\ehk(S)$.

So, 
$$ 
\e \leq k( \frac{n}{n-1} \ehk(R) - \frac{n}{b(n-1)}
\ehk(S)) + \ehk(J_1) + \ehk(R),
$$
which can be rearranged as

$$ \e \leq k( \frac{n}{n-1} \ehk(R) - \frac{n}{b(n-1)}
\ehk(S)) + \ehk(J_1, \fm) + 2\ehk(R).$$

If $k = \e-2$, then $J_1 = \m$, so $ \ehk(J_1, \fm) = 0$.  A small
amount of algebra gives the desired conclusion.

Assume that $k < \e-2$.  Then according to the set up for this case, we have that $J_1 \subsetneq \m$, $z \notin J_1$, and $z$ is a part of a minimal generating set for $\m$. Call this generating set $z, y_2, \ldots, y_h$ with $h=k+d$. Then $\m = (z, y_2, \ldots, y_h) +\m^2$.

So we may pick an
ideal $J_0= (y_2, \dots, y_h) + \m^2$ such that $J_1 \inc J_0 \inc J_0+(z) = \m$, where
$\length(\m/J_0) = 1$.  By Lemma~\ref{bound}, $\ehk(J_0,\m) \le 
\dfrac{n}{n-1} \ehk(R) - \dfrac{n}{b(n-1)}\ehk(S)$.  Also,
$\length(J_0/J_1) = \e-k-3$, so $\ehk(J_1,J_0) \le (\e-k-3)\ehk(R)$.
Putting this information into our inequality now yields
$$
\e \leq (k+1)( \frac{n}{n-1} \ehk(R) - \frac{n}{b(n-1)}
\ehk(S)) + (e-k-1)\ehk(R),
$$
and some algebra yields our other case.
\end{proof}

\subsection*{Lower bounds for the Hilbert-Kunz multiplicity of a Gorenstein F-regular ring:}

\medskip

We now begin a construction that will yield a lower bound for Gorenstein,
F-regular, non-regular local rings.  So assume that $(R,\m)$ is an
F-regular local ring of multiplicity $\e = \e(R) > 1$ and characteristic $p >2$.  By the results
in Section 4 we may actually assume that $\e\ge 6$.  Note that
$R$ must be a normal domain.  We may complete and 
extend the residue field to assume that it is algebraically closed.
Let $d = \dim R$ and $k = \mu(\m)-d$.
Let $\bx = x_1,\ldots, x_d$ be a minimal reduction of $\m$, so that
$\length(R/(\bx)) = \e$.  We now inductively choose $w_1,\ldots, w_d \in \m$
such that for each $i = 1,\ldots, d$, the set $w_1,\ldots, w_i, x_{i+1},
\ldots, x_d$ is a minimal reduction for $\m$, there is a set $A_i$ of minimal generators
of $(w_1,\ldots, w_i, x_{i+1},\ldots, x_d):\m^2$ (modulo 
$(w_1,\ldots, w_i, x_{i+1},\ldots, x_d)$) such that $w_{i+1}z \notin
(w_1,\ldots, w_i, x_{i+1},\ldots, x_d)$ for $z \in A_i$, if $k<\e-2$,
$w_{i+1} \notin (w_1,\ldots, w_i, x_{i+1},\ldots, x_d):\m^2$, and if $k = \e-2$,
$w_{i+1}$ belongs to $A_i$.  
Such a choice is due to our Lemma~\ref{choice}.

For convenience we let $\bw_i = w_1,\ldots, w_i$ and $\bx_{i+1} = x_{i+1},
\ldots, x_d$.

Now, fix $n$, and let $v_i = w_i^{1/n}$ be an $n$th root in $R^+$ for
$1 \le i \le n$.  As above, let $\bv_i = v_1,\ldots, v_i$.
Set $R_0 = R$ and for $i \ge 1$,
$R_i = R[v_1,\ldots, v_i] = R_{i-1}[v_i]$.
Each ring is henselian, so adjoining $v_i$ yields another local ring.
Moreover, all the residue fields are the same.
If we assume that $R_i$ is normal (e.g., if $R_i$ is F-regular),
then $R_{i+1} \cong R_i[X]/(X^n - w_{i+1})$, so $R_{i+1}$ is free of
rank $n$ over $R_i$  (since $R_i$ is normal, the minimal polynomial
of $v_{i+1}$ over $\ff(R_i)$ has coefficients in $R_i$, hence divides
$X^n - w_{i+1}$.  If it properly divides, then an interpretation of
the product of the constant terms involved will give $w_{i+1} \in
(\bw_i) + \m^2 \inc R$, meaning $w_{i+1}$ is not a minimal
generator of $\m$).  Thus, in the context of Theorem~\ref{ineq3}, applied
to $R_i \to R_{i+1}$, we have $n = b = [\ff(R_{i+1}),\ff(R_i)]$.

Let $t = \max\{i \mid R_i \textrm{\ is normal}\}$.  For $1 \le i \le t$ let
$\phi_i: \dfrac{R_0}{(\bw_i, \bx_{i+1})}
\twoheadrightarrow \dfrac{R_i}{(\bv_i, \bx_{i+1})}$.
We have that each $\phi_i$ is an
isomorphism. In particular, $\e(R_i) =\e$, for all $i \leq t$; also for $i \leq t$, $R_i$ is Gorenstein.

If we now write $\m_{R_0} = (\bw_i,\bx_{i+1})+J_i$
where $\mu(J_i) = \mu(\fm_{R_0})-d$ and $w_{i+1}$ is a minimal
generator of $J_i$, we have 
$(\bw_i,\bx_{i+1}):\m_{R_0}^2 =
(\bw_i,\bx_{i+1}):J_i^2$.
Note that $\m_{R_i} = (\bv_i,\bx_{i+1})+J_i$ (minimally).
The isomorphism $\phi_i$ now gives that 
\begin{align*}
\dfrac{(\bv_i,\bx_{i+1}):_{R_i}\m_{R_i}^2}{(\bv_i,\bx_{i+1})}
= \dfrac{\left((\bw_i,\bx_{i+1}):_{R_0}J_i^2\right) R_i
 + (\bv_i,\bx_{i+1})}{(\bv_i,\bx_{i+1})} \\
&=\dfrac{\left((\bw_i,\bx_{i+1}):_{R_0}\m_{R_0}^2\right) R_i
 + (\bv_i,\bx_{i+1})}
{(\bv_i,\bx_{i+1})}
\end{align*}

Since $ R'_0=\dfrac{R_0}{(\bw_i, \bx_{i+1})} \to R'_i=\dfrac{R_i}{(\bv_i, \bx_{i+1})}$ is an isomorphism of $R_0$-algebras we note that because the images of $J_i$ are minimal generators in the domain, they must be minimal generators in the codomain as well.  Moreover, $\Ann_{R'_0}(\fm^2_{0}) $ maps to $\Ann_{R'_i}(\fm^2_{i}) $ under the mentioned isomorphism, and so the minimal set of generators $A_i$ is a set of generators for
$$\dfrac{(\bv_i,\bx_{i+1}):_{R_i}\m_{R_i}^2}{(\bv_i,\bx_{i+1})},$$
and $w_{i+1} z \notin (\bv_i, \bx_{i+1})$ for $z \in A_i$ because $(\bv_i, \bx_{i+1}) \cap R_0= (\bw_i, \bx_{i+1})$.

Moreover, $\bw_i, \bx_{i+1}$ for a minimal reduction for $\fm_{R_0}$ hence $\bv_i, \bx_{i+1}$ form a minimal reduction for $\fm_{R_i}$. We also need that $v_1, \ldots, v_i, w_{i+1}, x_{i+2}, \ldots, x_d$ form a minimal reduction
of $\fm_{R_i}$.

When $k < \e-2$, $w_{i+1} \notin (\bw_i, \bx_{i+1}): \fm_{R_0}^2$. Since $\Ann_{R'_0}(\fm^2_{0}) $ maps to $\Ann_{R'_i}(\fm^2_{i}) $ under the isomorphism $R'_0 \to R'_i$ we get that $w_{i+1} \notin (\bv_i, \bx_{i+1}): \fm_{R_i}^2$. Finally if $k =\e-2$, then $w_{i+1} \in A_i$ by our initial choice.

This shows that Theorem~\ref{ineq3} may be applied to the extension
$R_i \to R_{i+1}$ if $R_i$ is F-regular, i.e., that $w_{i+1}$ satisfies the necessary conditions
to be chosen as the $z$ in Theorem~\ref{ineq3}.

We make several observations about  the case that we may obtain an $R_d$ in
the above manner.  If we write $\m_{R_0} = (w_1,\ldots, w_d) + J$ with
$\mu(J) = \mu(\m_{R_0}) -d$, then $\m_{R_d} = (v_1,\ldots, v_d) + J$.
Thus every generator of $J$ is in $\overline{(w_1,\ldots, w_d)R_{d}}
= \overline{(v_1^{n},\ldots, v_d^n)R_d} = \overline{\m^n_{R_d}}$.
In addition, we note that via the isomorphism $\phi_d$ we may
filter $R_d/(v_1,\ldots, v_d))$, by essentially the same filtration
as we take of $R_0/(w_1,\ldots, w_d)$.  Let 
$r = \max\{ j\mid (\m_{R_0}^j + (w_1,\ldots, w_d))/(w_1,\ldots, w_d) \ne 0\}$.
We may then take a socle generator $u \in \m_{R_0}^r$, modulo $(\bw_d)$.
The same element will now represent a socle element in $R_d/(\bv_d)$,
and will have valuation at least $rn$.  Hence, if $rn\ge d$ then by
the Brian\c con-Skoda Theorem, $u \in (\bv_d)^*$, and $R_d$ is not
F-regular.

In particular, if $n \ge \lceil d/2\rceil$ (if $k = \e-2$), or $n \ge
\lceil d/3 \rceil$ (if $k < \e-2$, by Lemma~\ref{prep}), the ring $R_d$ cannot be F-regular.

Choose such $n$ and let $s = \max\{i : R_i \textrm{\ is  F-regular}\}$.  
Note $s<d$, and hence $R_{s+1}$ is not F-regular.

In each application of the theorem, $b = n$, so we obtain from 
Theorem~\ref{ineq3}, for each $i \le t$ (or $i < d$ if $t=d$) that
\[ 
\ehk(R_{i}) \ge
\begin{cases}
1 + \dfrac{\e-2}{\e n -2} (\ehk(R_{i+1})-1) & \textrm{ if $k = \e -2$;}\\
1 +  \dfrac{k+1}{(n-1)\e + k + 1} (\ehk(R_{i+1})-1) 
&\textrm{  if $k < \e-2$.}
\end{cases}
\]
  
By Corollary 3.10~\cite{AE}, $\ehk(R_{s+1}) \ge 1 +1/d$.

Hence 
\[ 
\ehk(R_{0}) \ge
\begin{cases}
1 + \left(\dfrac{\e-2}{\e n -2}\right)^{s+1} \left(\dfrac {\e}2 - 1 \right) 
& \textrm{ if $k = \e -2$;}\\
1 +\left(\dfrac{k+1}{(n-1)\e+k+1}\right)^{s+1}\left( \dfrac{1}{d}
\right) 
&\textrm{  if $k < \e-2$.}
\end{cases}
\]

We then get the following lower bounds for non-regular rings,
using that we may assume that  $6 \le \e \le d!$, $k \ge 3$: 
\[ 
\ehk(R_{0}) \ge
\begin{cases}
1 + \left(\dfrac{4}{6\lceil d/2\rceil-2}\right)^{d}\cdot 2 
& \textrm{ if $k = \e -2$;}\\
1+\left(\dfrac{4}{(\lceil d/3\rceil)d! +4}\right)^d\left( \dfrac{1}{d}\right) 
&\textrm{  if $k < \e-2$.}
\end{cases}
\]

Therefore we can state the final result:

\begin{Thm}
Let $R$ be a local Gorenstein F-regular ring of dimension $d \geq 2$ and Hilbert-Samuel multiplicity $\e \geq 6$ and positive characteristic $p >2$. Let $k = \embdim(R)-\dim(R)$. Assume further that $R$ is not a complete intersection.

Then if $\e \geq d! +1$ then $\ehk(R) \geq 1 + \frac{1}{d!}$. Otherwise

\[ 
\ehk(R) \ge
\begin{cases}
1 + \left(\dfrac{4}{6\lceil d/2\rceil-2}\right)^{d}\cdot 2
& \textrm{ if $k = \e -2$;}\\
1+\left(\dfrac{4}{(\lceil d/3\rceil)d! +4}\right)^d\left(\dfrac{1}{d}\right) 
&\textrm{  if $k < \e-2$.}
\end{cases}
\]
\end{Thm}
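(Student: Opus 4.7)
The plan is to split the argument by the size of $\e = \e(R)$ and, in the nontrivial regime, to iterate the radical-extension machinery developed throughout this section. When $\e \geq d!+1$, the elementary bound $\ehk(R) \geq \e/d!$ already gives $\ehk(R) \geq 1 + 1/d!$, settling that case at once.

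When $6 \leq \e \leq d!$, I would execute the tower $R_0 \subset R_1 \subset \cdots$ constructed just before the theorem. At each step I invoke Lemma~\ref{choice} to produce a minimal generator $w_{i+1}$ of $\fm_{R_i}$ satisfying the hypotheses of Theorem~\ref{ineq3}, and form $R_{i+1} = R_i[v_{i+1}]$ with $v_{i+1}^n = w_{i+1}$. Take $n = \lceil d/2 \rceil$ in the case $k = \e-2$ and $n = \lceil d/3 \rceil$ in the case $k < \e-2$; by the Brian\c con--Skoda theorem applied to a socle representative of $R_0/(\bw_d)$, these values of $n$ are large enough to force $R_d$ to fail F-regularity, so the largest F-regular index $s$ satisfies $s+1 \leq d$. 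Normality of each $R_i$ with $i \leq s$ makes $R_i \to R_{i+1}$ free of rank $n = b$, so Theorem~\ref{ineq3} yields the one-step inequality $\ehk(R_i) - 1 \geq \lambda\bigl(\ehk(R_{i+1})-1\bigr)$ with $\lambda = (\e-2)/(\e n - 2)$ or $\lambda = (k+1)/((n-1)\e + k + 1)$ depending on the case. Iterating $s+1 \leq d$ times and plugging in the endpoint bound $\ehk(R_{s+1}) \geq \e/2$ from Theorem~\ref{duality}(ii) (applicable because $R_{s+1}$ is Gorenstein, not F-regular, with the invariant $k = \e-2$ inherited from $R_0$) in the first case, or $\ehk(R_{s+1}) \geq 1+1/d$ from \cite[Corollary~3.10]{AE} in the second, together with the uniform lower bounds $\lambda \geq 4/(6\lceil d/2\rceil - 2)$ and $\lambda \geq 4/(\lceil d/3\rceil d! + 4)$ valid under $6 \leq \e \leq d!$ and $k \geq 3$ (the inequality $k\geq 3$ coming from $R$ being Gorenstein but not a complete intersection), produces exactly the two stated bounds.

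The main obstacle is ensuring that Lemma~\ref{choice} applies coherently at every level of the tower. This relies on the isomorphism $\phi_i\colon R_0/(\bw_i,\bx_{i+1}) \to R_i/(\bv_i,\bx_{i+1})$ transporting both the annihilator-of-$\fm^2$ structure and the minimality of generating sets from $R_0$ up to $R_i$, so that the hypotheses of Theorem~\ref{ineq3} propagate up the tower. One also needs F-regularity of $R_i$ (for $i \leq s$) to guarantee normality, hence that $X^n - w_{i+1}$ is the minimal polynomial of $v_{i+1}$ over $\ff(R_i)$ and $b = n$ in each application of Theorem~\ref{ineq3}; and one must verify that the Gorenstein property, the multiplicity $\e$, and the invariant $k$ are all preserved along the tower—this last point is what lets the endpoint estimate in the $k = \e-2$ case close the recursion cleanly.
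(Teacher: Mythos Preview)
Your proposal is correct and follows essentially the same approach as the paper: the case $\e \ge d!+1$ is handled by the elementary inequality $\ehk(R)\ge \e/d!$, and the remaining case is treated by iterating Theorem~\ref{ineq3} along the radical-extension tower constructed in Section~\ref{root-compare}, with the Brian\c con--Skoda argument bounding $s+1\le d$ and then specializing $\e,k$ to obtain the stated numerical bounds. Your justification of the endpoint estimate $\ehk(R_{s+1})\ge \e/2$ in the $k=\e-2$ case via Theorem~\ref{duality}(ii) (using that $R_{s+1}$ is Gorenstein, not F-regular, and inherits the same $k$ and $\e$ from $R_0$) is in fact more explicit than the paper's text, which only cites the bound $\ehk(R_{s+1})\ge 1+1/d$ from \cite{AE} yet uses $\e/2-1$ in that case.
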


\begin{proof}
It suffices to remind the reader that the first claim is well-known (see~\cite{BE}). The last inequality is what we have proved in Section 6.
\end{proof}

\end{document}